\def\doublespaced{\baselineskip=\normalbaselineskip
    \multiply\baselineskip by 2}
\def\doublespace{\doublespaced}
\newcommand{\hd}{\mbox{H-dim}\;}
\newcommand{\rn}[1]{\mathbb{R}^{#1}}
\newcommand{\beq}{\begin{equation}}
\newcommand{\bea}[1]{\begin{array}{#1} }
\newcommand{\eeq}{ \end{equation}}
\newcommand{\ea}{ \end{array}}
\newcommand{\ep}{\epsilon}
\newcommand{\es}{\emptyset}
\newcommand{\al}{\alpha}
\newcommand{\ga}{\gamma}
\newcommand{\de}{\delta}
\newcommand{\ds}{\displaystyle}
\newcommand{\ts}{\textstyle}
\newcommand{\rar}{\mbox{$\rightarrow$}}
\newcommand{\ran}{\rangle}
\newcommand{\lan}{\langle}
\newcommand{\Ga}{\Gamma}
\newcommand{\la}{\lambda}
\newcommand{\La}{\Lambda}
\newcommand{\ar}{\partial}
\newcommand{\si}{\sigma}
\newcommand{\om}{\omega}
\newcommand{\Om}{\Omega}
\newcommand{\be}{\beta}
\newcommand{\ph}{\phi}
\newcommand{\he}{\theta}
\newcommand{\He}{\Theta}
\newcommand{\Ph}{\Phi}
\newcommand{\hs}[1]{\mbox{$ \hspace{#1}$}}
\newcommand{\sem}{\setminus}
\newcommand{\ze}{\zeta}
\newcommand{\ti}{\tilde}
\begin{document}

\title{Hausdorff dimension and  $ \si $ finiteness of $p-$harmonic measures in space when
$p\geq n$
\thanks{Akman and Lewis were partially supported by NSF DMS-0900291}
}
\titlerunning{Hausdorff dimension and  $\si$ finiteness of $p-$harmonic measures}        

\author{Murat Akman         \and
        John Lewis 	\and
	Andrew Vogel
}

\authorrunning{M. Akman \and J. Lewis \and A. Vogel} 

\institute{Murat Akman \at
               Mathematics Department, University of Kentucky, Lexington, Kentucky, 40506\\
		\email{murat.akman@uky.edu}
           \and
           John Lewis \at
           Mathematics Department, University of Kentucky, Lexington, Kentucky, 40506\\
		\email{johnl@uky.edu}
		\and
		Andrew Vogel \at
		Department of Mathematics, Syracuse University, Syracuse, New York 13244\\
		\email{alvogel@syracuse.edu}
}

\date{June 24, 2013}

\maketitle

\begin{abstract}     
In this paper we   study   a    measure, $ \hat \mu, $  associated with a  positive  $ p $ harmonic function $
     \hat u $  defined in an open set $ O\subset \mathbb{R}^n $ and vanishing on   a portion $ \Ga $ of  $ \ar O. $  If $ p > n $ we show $ \hat  \mu $
       is concentrated on a set of  $ \sigma $ finite $ H^{n-1} $ measure  while if  $ p = n $  the  same conclusion holds provided  $ \Ga $ is  uniformly fat in the sense of  $n$  capacity.
       Our work  nearly  answers in the affirmative  a  conjecture in  \cite{L12}  and  also appears to be the  natural extension of \cite{JW88,W93} to  higher  dimensions.

       \keywords{p  harmonic function \and p laplacian \and p harmonic measure \and Hausdorff measure}
\subclass{35J25 \and 37F35}
\end{abstract}

\section{Introduction}
Denote points in Euclidean $ n $-space $ \rn{n}  $  by $ x = ( x_1,
   \dots,  x_n) $  and  let
       $ \bar E, \ar E, $ diam $ E, $
    be the closure,
    boundary,  and  diameter  of the set $ E \subset
    \mathbb {R}^{n}. $   Let  $ d ( E, F ) $ be the distance between the sets $ E, F $  and  $ d ( y, E ) =
    d ( \{y\}, E ). $   Let   $  \lan \cdot ,  \cdot  \ran $  denote  the standard inner
    product on $ \rn{n}  $ and let  $  | x | = \lan x, x \ran^{1/2} $ be
    the  Euclidean norm of $ x. $   Set  $ B (x, r ) =
    \{ y \in \mathbb {R}^{n} : | x  -  y | < r \} $  whenever  $ x \in \mathbb {R}^{n}, \, r
    >0, \,  $ and let  $ dx $ denote  Lebesgue $ n $-measure on    $ \rn{n}. $
       If $ O  \subset \mathbb {R}^{n} $ is open and $ 1  \leq  q  \leq  \infty, $ then by   $
    W^{1 ,q} ( O ) $ we denote the space of equivalence classes of functions
    $ f $ with distributional gradient $ \nabla f = ( f_{x_1},
     \dots, f_{x_n} ), $ both of which are $ q $
    th power integrable on $ O. $  Let  $  \| f \|_{1,q} = \| f \|_q +  \| \, | \nabla f | \, \|_{q}  \,  $
    be the  norm in $ W^{1,q} ( O ) $ where $ \| \cdot \|_q $ denotes
    the usual  Lebesgue $ q $ norm in $ O. $  Next let $ C^\infty_0 (O )$ be
     the set of infinitely differentiable functions with compact support in $
    O $ and let  $ W^{1,q}_0 ( O ) $ be the closure of $ C^\infty_0 ( O ) $
    in the norm of $ W^{1,q} ( O  ). $  If  $ K  \subset \bar{B} ( x, r  ) $ is a compact set let
     \[  C ( K,  B ( x , 2r ) )   =   \inf \int_{ \rn{n} }  | \nabla \ph |^n  \, dx   \]
     where the infimum is taken over all $ \ph \in W^{1,n}_0 ( B (x, 2r ) ) $ with
     $ \ph \equiv  1 $ on  $ K. $ We say that a compact set $ E \subset \rn{n} $ is locally $(n,r_0) $ 
uniformly fat or locally uniformly $(n,r_0)$ thick  provided  there exists
     $  r_0, \be  >  0,   $  such that whenever $ x \in E, 0 < r \leq r_0, $
       \[  C ( E \cap \bar B ( x, r ) , B ( x, 2r) ) \geq  \be.  \]

     Let    $ O \subset \rn{n}  $  be an open set  and  $ \hat z  \in \ar O. $
    Fix  $  p,  1 < p < \infty, $ and suppose that
 $ \hat  u $ is  a positive weak solution   to the $ p $ Laplace equation
in $  O\cap B ( \hat z, \rho ). $   That is,
$ \hat  u \in W^ {1,p} ( O\cap B ( \hat z, \rho ) ) $ and
\begin{equation}   
\label{1.1}  
\int | \nabla \hat  u |^{p - 2}  \, \lan  \nabla \hat  u , \nabla \he  \ran \, dx   = 0    
\end{equation}
 whenever $  \he  \in
W^{1, p}_0 (  O \cap B ( \hat z, \rho ) ). $  Equivalently we say that
  $\hat u $ is $p$ harmonic in $ O \cap  B ( \hat z, \rho ) $.
 Observe that if $ \hat  u $ is smooth and $ \nabla \hat  u \not = 0 $
in $ O \cap B ( \hat z, \rho ), $ then
$ \, \nabla \cdot ( | \nabla \hat  u |^{ p - 2} \, \nabla \hat  u ) \equiv 0,
$  in the classical sense, where $ \nabla \cdot $ denotes divergence.  We assume that  $ \hat  u $ has  zero  boundary values on
$ \ar  O \cap  B ( \hat z, \rho ) $ in the  Sobolev sense. More specifically if
$ \ze \in C^\infty_0 ( B ( \hat z, \rho ) ), $ then
$ \hat  u \, \ze \in W^{1,p}_0 (  O \cap B ( \hat z, \rho ) ). $
Extend $ \hat  u $ to  $  B ( \hat z, \rho )  $ by putting
$ \hat  u \equiv 0 $ on $ B ( \hat z, \rho )  \sem O. $   Then  $ \hat  u \in W^{1,p}
( B ( \hat z, \rho  ) )  $   and it follows from (\ref{1.1}), as in \cite[Chapter 21]{HKM06}, that
there exists a positive  Borel measure $ \hat  \mu $  on
$ \rn{n}  $ with support contained in $ \ar O \cap \bar B (  \hat z,  \rho )  $  and the property
that
\begin{equation} 
\label{1.2}
 \int | \nabla \hat  u |^{p - 2}  \, \lan  \nabla \hat  u ,
 \nabla \phi  \ran \, dx  = -  \int  \phi  \, d \hat  \mu   
\end{equation}
 whenever
 $ \ph \in C_0^\infty ( B ( \hat z, \rho  ) ). $
We note that if $ \ar O $ is smooth enough, then
$ \,  \,  d \hat  \mu = | \nabla \hat  u |^{p - 1} \, dH^{n-1} $  where $ H^{n-1} $ denotes  
Hausdorff $n-1$ dimensional measure defined after  Theorem \ref{theorem1.1}.

In this paper  we continue our study of  $  \hat \mu $  for    $    n \leq p  < \infty.  $
We prove
\begin{theorem}   
\label{theorem1.1}
Fix $p, n \leq p < \infty $  and let  $ \hat z, \rho,  \hat u,  \hat\mu $ be as in
(\ref{1.2}).  If  $ p >  n, $ then $ \hat \mu $ is concentrated on a set of  $\si $ finite  $ H^{n-1} $ measure. If $ p = n $ and  $ \ar  O \cap B ( \hat z, \rho ) $ is
locally $ (n, r_0)$ uniformly fat, then  $ \hat \mu $ is concentrated on a set of  $ \sigma $ finite $ H^{n-1} $ measure.
\end{theorem}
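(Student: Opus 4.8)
The plan is to reduce the theorem to a statement about the size of the set where the $p$-harmonic function $\hat u$ decays at a controlled polynomial rate near $\partial O$, and then to estimate the Hausdorff content of that set using the $p$-Laplace equation together with boundary Harnack and Caccioppoli-type estimates. The fundamental heuristic, following the work of Lewis and collaborators on dimensions of $p$-harmonic measures, is that for $p\geq n$ a positive $p$-harmonic function vanishing on $\Gamma$ cannot decay faster than some fixed power of the distance to the boundary except on a small set, and where it decays at a definite rate the measure $\hat\mu$ is comparable to $|\nabla\hat u|^{p-1}$ integrated over level sets, which is essentially $(n-1)$-dimensional. The first step I would take is to localize: by a Vitali/Besicovitch covering argument it suffices to prove the $\sigma$-finiteness claim on $\partial O\cap B(\hat z,\rho/2)$, and on compact subsets one may work with a single scale. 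Next I would introduce, for each small $\delta>0$ and large constant $\Lambda$, the ``bad set''
\[
E_\delta = \Big\{ x\in \partial O\cap B(\hat z,\rho/2) : \sup_{B(x,s)} \hat u \leq \Lambda\, s^{\,?}\ \text{for some } s\le\delta \Big\},
\]
with the exponent chosen so that $\hat\mu$ assigns zero mass to $\bigcap_\delta E_\delta$, reducing everything to the complement, where nondegeneracy of $\hat u$ holds at every scale.

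The core estimate is then to show that on the good set — where $\sup_{B(x,s)}\hat u \gtrsim s^{\gamma}$ for all small $s$, with an appropriate $\gamma$ — one has a lower bound on $\hat\mu(B(x,s))$ of the form $\hat\mu(B(x,s))\gtrsim s^{\,n-1}$, or more precisely a doubling-type/comparison estimate relating $\hat\mu(B(x,s))$ to $(\sup_{B(x,s)}\hat u)^{p-1} s^{n-p}$ via the divergence structure (\ref{1.2}), testing with a cutoff. Combined with an upper bound $\hat\mu(B(x,2s))\lesssim (\sup_{B(x,s)}\hat u)^{p-1}s^{n-p}$ coming from Caccioppoli plus the fundamental inequality $\int_{B(x,s)}|\nabla\hat u|^p\,dx\lesssim s^{-p}\int_{B(x,2s)}\hat u^p\,dx$ and interior estimates, this pins $\hat\mu$ to behave like an $(n-1)$-Ahlfors regular measure on the good set, hence concentrated on a set of $\sigma$-finite $H^{n-1}$ measure by a standard density/covering lemma (if a Radon measure satisfies $\mu(B(x,r))\le C r^{n-1}$ for all $x$ in a set $F$ and all small $r$, then $\mu\lfloor F$ is absolutely continuous with respect to $H^{n-1}$ and the mass estimate gives $\sigma$-finiteness). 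The role of the hypothesis $p\ge n$ enters precisely in the lower/upper matching: for $p>n$, $p$-harmonic functions are $C^{1,\alpha}$ and, crucially, the $n$-capacity of a point is positive relative to $p$-capacity considerations, so the Wiener-type condition at boundary points is automatic; for $p=n$ one must feed in the local $(n,r_0)$ uniform fatness of $\Gamma$ exactly to recover the boundary Hölder continuity and the nondegeneracy $\sup_{B(x,s)}\hat u\gtrsim (\text{capacity density})\cdot\sup_{B(x,2s)}\hat u$ that was free in the $p>n$ case.

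The hard part, I expect, will be establishing the nondegeneracy dichotomy with a \emph{uniform} exponent: showing that the set of boundary points where $\hat u$ decays faster than the critical power carries no $\hat\mu$ mass. This is where one needs the fine potential theory of the $p$-Laplacian — a boundary Harnack inequality up to $\Gamma$, the comparison of $\hat u$ with the $p$-capacitary potential of $B(\hat z,\rho)\setminus O$, and a monotonicity or almost-monotonicity formula for an $L^p$ average of $|\nabla\hat u|$ — and these must be invoked in the form already developed in \cite{L12} and the references to \cite{JW88,W93,HKM06}. A secondary technical obstacle is the passage from the scale-invariant estimates at a fixed scale to a genuine covering of the concentration set by balls on which the $H^{n-1}$-content is summably controlled; this requires care because $\partial O$ need not be connected or regular, so one iterates the construction over a countable family of dyadic scales $\delta=2^{-k}$ and uses that $\hat\mu$ is a finite Radon measure to extract the $\sigma$-finite exhaustion. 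Once the good-set Ahlfors-type bound and the null-set property of the complement are in hand, the conclusion is immediate.
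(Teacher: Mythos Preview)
Your overall target is right: one needs to show $\hat\mu$ is concentrated on the set $P=\{x:\limsup_{t\to 0}\hat\mu(B(x,t))/t^{n-1}>0\}$, since $P$ has $\sigma$-finite $H^{n-1}$ measure by an easy covering argument, and Lemma~\ref{lemma2.3} indeed gives the two-sided comparison $\hat\mu(B(x,r))\approx r^{n-p}(\max_{B(x,r)}\hat u)^{p-1}$ you describe. The genuine gap is in your plan for the hard direction, namely showing that the ``bad set'' where $\hat u$ decays superlinearly carries no $\hat\mu$-mass. You propose to get this from a boundary Harnack inequality, comparison with capacitary potentials, and a monotonicity formula. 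But the paper explicitly points out that the lower inequality in \eqref{1.7}, which is what boundary Harnack would give you, \emph{fails} as soon as $\partial O$ is disconnected; there is no nondegeneracy of the form $\sup_{B(x,s)}\hat u\gtrsim s$ available here, and none of the tools you list supplies it for a general open set (or, when $p=n$, a set whose boundary is merely uniformly $n$-fat).

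What the paper actually uses is quite different and is the one place where the hypothesis $p\geq n$ enters. The key analytic fact is \eqref{1.6}: $v=\log|\nabla\hat u|$ is a \emph{subsolution} of the linearized operator $L$ precisely when $p\geq n$. The proof then follows Wolff's scheme: after rescaling and a doubling reduction, one removes from $\Omega$ a Besicovitch family of balls $B(z_j,t_j)$ chosen so that either $\mu(B(z_j,t_j))=Mt_j^{n-1}$ or $t_j=s$ is a fixed tiny scale, and solves a new $p$-harmonic Dirichlet problem for $u'$ on the excised domain. The subsolution property gives a global gradient bound $|\nabla u'|\le cM^{1/(p-1)}$ (Lemma~\ref{lemma3.2}) and, via an integration-by-parts identity for $u'$ against $\log|\nabla u'|$, the entropy-type estimate $\int_{\partial D}|\nabla u'|^{p-1}\,|\log|\nabla u'||\,dH^{n-1}\le c\log M$ (Lemma~\ref{lemma3.4}). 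This forces most of the auxiliary measure $\mu'$ onto balls where $|\nabla u'|$ is not too small, which translates back to a set $E$ with $\mu(E)\ge c^{-1}$ but $\phi^\lambda_\tau(E)\le\varepsilon$ for any gauge $\lambda$ with $r^{1-n}\lambda(r)\to 0$. Iterating over scales yields $H^\lambda(Q_1)=0$ with $\hat\mu(Q_1^c)=0$, and a diagonal choice of $\lambda$ then rules out any mass on $P^c$. None of this machinery appears in your outline; without the $\log|\nabla u|$ subsolution step, there is no mechanism in your argument that distinguishes $p\ge n$ from $p<n$, which is a sign that the proposed route cannot close.
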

To define Hausdorff measure and  outline previous work we shall need some more notation.
If $ \la > 0 $ is a positive function on $ (0, \hat r_0 ) $   with
 $ { \ds \lim_{r\rar 0} \la (  r ) = 0 } $ define  $ H^\la $
Hausdorff measure on   $ \rn{n} $
as follows:  For fixed $0 <  \delta
 < \hat r_0  $ and $ E \subseteq \rn{2} $, let
$ L ( \delta ) = \{ B ( z_i,  r_i ) \} $ be such that
$ E \subseteq \bigcup \, B ( z_i , r_i ) $ and $ 0 < r_i < \delta , ~~
i = 1,2,..$.  Set
\[ \phi_{ \delta }^\la (E) = {\displaystyle \inf_{ L ( \delta ) } }
 \sum  \,  \la ( r_i )  .  \]
  Then
\[ H^\la (E) = {\displaystyle \lim_{ \delta \rightarrow 0 } }
 \, \, \phi_{ \delta }^\la (E)  . \]
In case $ \la ( r ) = r^\alpha $ we write $ H^\alpha $ for $  H^\la. $

Define the  Hausdorff dimension of a Borel measure  $ \nu  $ on $ \rn{n}$ by
\[   \hd   \nu  \, =
  \inf   \{ \alpha : \exists E
  \mbox{ Borel  with }    H^\alpha ( E ) = 0 \mbox{ and }    \nu ( \rn{n} \sem E ) = 0 \}. \]
        From Theorem \ref{theorem1.1} and the definition of  $ \hd \nu $ it is easily seen that
\begin{corollary}    
\label{corollary1.2}
Let $ \hat u, \hat \mu,  $  be as  in Theorem \ref{theorem1.1}.  Then  $ \hd \hat \mu \leq n - 1. $
\end{corollary}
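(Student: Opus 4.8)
The plan is to deduce the bound on Hausdorff dimension directly from the $\sigma$-finiteness assertion of Theorem \ref{theorem1.1} together with the definition of $\hd$. First I would invoke Theorem \ref{theorem1.1} to produce a Borel set $A \subset \ar O \cap \bar B(\hat z, \rho)$ on which $\hat\mu$ is concentrated (so $\hat\mu(\rn{n}\sem A)=0$) and which is $\sigma$ finite with respect to $H^{n-1}$; that is, $A = \bigcup_{j=1}^\infty A_j$ with $H^{n-1}(A_j) < \infty$ for each $j$. The key elementary fact is that a set of finite $H^{n-1}$ measure has $H^\alpha$ measure zero for every $\alpha > n-1$: indeed, covering $A_j$ by balls $\{B(z_i,r_i)\}$ with $r_i < \delta$ and $\sum_i r_i^{n-1} \leq H^{n-1}_\delta(A_j) + 1 \leq H^{n-1}(A_j)+1$, we get $\sum_i r_i^\alpha \leq \delta^{\alpha-(n-1)} \sum_i r_i^{n-1} \leq \delta^{\alpha-(n-1)}\bigl(H^{n-1}(A_j)+1\bigr) \to 0$ as $\delta \to 0$, so $H^\alpha(A_j)=0$. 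By countable subadditivity of $H^\alpha$ this gives $H^\alpha(A)=0$.

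Now fix any $\alpha > n-1$. We have exhibited a Borel set $A$ with $H^\alpha(A)=0$ and $\hat\mu(\rn{n}\sem A)=0$, so by the very definition of $\hd\hat\mu$ as an infimum we conclude $\hd\hat\mu \leq \alpha$. Since this holds for every $\alpha > n-1$, taking the infimum over such $\alpha$ yields $\hd\hat\mu \leq n-1$, which is the claim. (One should note that when $p=n$ this argument uses the local uniform fatness hypothesis on $\ar O \cap B(\hat z,\rho)$ only through its role in Theorem \ref{theorem1.1}; no additional input is needed here.)

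There is essentially no obstacle: the corollary is a formal consequence of the theorem and the definitions, and the only mildly technical point is the scaling estimate $H^\alpha(E)=0$ whenever $H^{n-1}(E)<\infty$ and $\alpha>n-1$, which is standard measure theory. The real content lies entirely in Theorem \ref{theorem1.1}, whose proof — constructing the $\sigma$ finite set on which $\hat\mu$ concentrates, via fundamental inequalities for $p$ harmonic functions and a Besicovitch-type covering argument on the boundary — is carried out separately.
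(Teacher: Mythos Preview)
Your argument is correct and is precisely what the paper has in mind: the paper itself does not give a detailed proof, merely stating that the corollary ``is easily seen'' from Theorem \ref{theorem1.1} and the definition of $\hd\nu$, and your proposal spells out exactly this routine deduction.
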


For $n=2, 1 < p < \infty, $ Lewis proved in \cite{L12}
 the following theorem which generalized  earlier results in \cite{BL05,L06,LNP11}.

\begin{theorem}
\label{theorem1.3}
 Given  $ p,  1 < p  < \infty, p \not
= 2, \,  $ let  $ \hat  u, \hat  \mu  $ be as in (\ref{1.1}), (\ref{1.2}),
 with $ \rho = \infty $  and  suppose
$ O $ is a  simply connected bounded domain.
Put  
\[
\la ( r ) = \la ( r, A ) =
r \, \exp[   A  \sqrt{ \log 1/r \,  \log \log \log 1/r } ],
0 < r < 10^{- 6}.   
\]
Then the following is true.
   \[
   \begin{array}{l} 
   (a) \hs{.2in}     \mbox{  If   $ p > 2,  $    then   $ \hat \mu $ is  concentrated on a  set of  $ \si $ finite $ H^1 $ measure. }
\\
(b) \hs{.2in} \mbox{  If   $   1 < p < 2, $
then   $  \hat \mu $ is absolutely
  continuous with respect to $ H^\la$} \\ \hs{.40in} \mbox{  provided $ A = A (p) \geq 1 $ is large enough. }
    \end{array} 
    \]  
\end{theorem}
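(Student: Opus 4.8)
The plan is to exploit that $O$ is a simply connected \emph{planar} domain by passing to a hodograph (partial Legendre) transform that linearizes the boundary behaviour of $\log|\nabla\hat u|$, thereby reducing Theorem \ref{theorem1.3} to Makarov-type law-of-the-iterated-logarithm estimates in the spirit of his work on harmonic measure. First one reduces to the case $\nabla\hat u\neq0$ on $O$: in the plane the critical set of a $p$-harmonic function is discrete with local model a power of a conformal map, so the critical points and their shadows carry no $\hat\mu$ mass and lie on a set of finite $H^1$ measure. On the complement, simple connectivity yields the $p$-harmonic conjugate $\hat v$, with $\hat v_{x_1}=-|\nabla\hat u|^{p-2}\hat u_{x_2}$ and $\hat v_{x_2}=|\nabla\hat u|^{p-2}\hat u_{x_1}$; then $\hat v$ is $q$-harmonic ($p^{-1}+q^{-1}=1$), $f=\hat u+i\hat v$ is a quasiregular homeomorphism onto its image $G$ with dilatation depending only on $p$, and a direct computation along $\{\hat u=0\}$ gives $d\hat\mu=|\nabla\hat u|^{p-1}\,dH^1=dv$. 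Thus $\hat\mu$ becomes one-dimensional Lebesgue measure on the (straight) part of $\partial G$ lying over $\partial O$. Writing $\zeta=u+iv$ and $z=z(\zeta)$ for the inverse map, one checks that $\tau:=\log|\nabla\hat u|\circ z$ solves a second order \emph{linear} elliptic equation with constant coefficients, so after a linear change of coordinates $\tau$ is harmonic in the transformed image domain $\tilde G$, while $\log|z'|=-\log|f_z|$ is controlled by $\tau$ up to constants depending only on $p$ and a bounded quasiconformal error, since $|f_z|\asymp|\nabla\hat u|\,(1+|\nabla\hat u|^{p-2})$.

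\noindent\textbf{Distortion.} Next one needs the Koebe-type distortion theorem for the quasiregular map $z\colon\tilde G\to O$: for $x\in\partial O$ with corresponding boundary point $\zeta_x\in\partial\tilde G$, and $\zeta_x^{\,r}$ the interior point whose image lies at distance $\asymp r$ from $x$, distortion yields $\hat\mu\big(B(x,r)\big)\asymp r\exp\!\big(c(p)\,\tau(\zeta_x^{\,r})+O(1)\big)$ for some $c(p)>0$, and in particular $\hat\mu(B(x,r))\leq r\exp\!\big(C(p)\,|\tau(\zeta_x^{\,r})|\big)$. The factor $|\nabla\hat u|^{p-2}$, whose sign changes at $p=2$, is what is responsible for the dichotomy between (a) and (b): it controls the sign of the weighted energy integral $\int\log|\nabla\hat u|\,d\hat\mu$ and hence whether the hodograph distortion concentrates or spreads $\hat\mu$. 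Finally, the interior Caccioppoli and reverse H\"older inequalities for $|\nabla\hat u|$, together with $C^{1,\al}$ estimates, give $|\nabla\tau|(\zeta)\leq C(p)/d(\zeta,\partial\tilde G)$, i.e.\ $\tau$ is a harmonic Bloch function on $\tilde G$ with norm at most $C(p)$.

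\noindent\textbf{Iterated logarithm and conclusion.} Applying Makarov's law of the iterated logarithm to the harmonic Bloch function $\tau$ on the (essentially Jordan) domain $\tilde G$, and using that $\{\hat u=0\}$ is a straight boundary arc so that harmonic measure there is comparable to $dv=d\hat\mu$, one gets: for $\hat\mu$-a.e.\ $x$, $|\tau(\zeta_x^{\,r})|\leq A_0\sqrt{\log(1/r)\log\log\log(1/r)}$ for all small $r$, with $A_0=A_0(p)$. Combined with the distortion estimate this gives $\hat\mu(B(x,r))\leq r\exp[A\sqrt{\log(1/r)\log\log\log(1/r)}]=\la(r,A)$ for $\hat\mu$-a.e.\ $x$ and all small $r$ once $A=A(p)\geq1$ is large enough; the standard comparison of a finite measure with a Hausdorff gauge then yields $\hat\mu\ll H^\la$, which proves (b). For $p>2$ this already gives the same upper bound, but (a) requires in addition the lower density estimate $\limsup_{r\to0}\hat\mu(B(x,r))/r>0$ for $\hat\mu$-a.e.\ $x$. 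Here one uses an integration-by-parts (Carleson-type) identity for the finite-energy solution $\hat u$, valid precisely in the regime $p>2$, which forces $\tau(\zeta_x^{\,r})$ to remain bounded along a sequence of scales $r\to0$ at $\hat\mu$-a.e.\ $x$; with the distortion estimate this gives the required lower bound. Setting $E_k=\{x:\limsup_{r\to0}\hat\mu(B(x,r))/r>1/k\}$, the Frostman/Besicovitch density theorem gives $H^1(E_k)\leq C\,k\,\hat\mu(E_k)<\infty$, while $\hat\mu\big(\re^2\setminus\bigcup_kE_k\big)=0$; hence $\hat\mu$ is concentrated on the $\sigma$-finite set $\bigcup_kE_k$, proving (a).

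\noindent\textbf{Main obstacle.} I expect the crux to be the distortion step: making the Koebe-type estimates rigorous for the \emph{merely} quasiregular (not conformal) hodograph map with constants depending only on $p$, and controlling the interaction with the critical points of $\hat u$ when patching together the local conjugate functions on the simply connected domain. A second, genuinely new difficulty---with no analogue in Makarov's harmonic-measure theory, where $p=2$ and $\hat\mu$ may fail to be $\sigma$-finite with respect to $H^1$---is the Carleson-type identity and the ensuing lower density bound that, precisely when $p>2$, upgrade the conclusion from ``$\hat\mu\ll H^\la$'' to ``$\hat\mu$ is concentrated on a set of $\sigma$-finite $H^1$ measure''.
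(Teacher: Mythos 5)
The statement you are proving is not established in this paper; it is Lewis's Theorem 1 from \cite{L12}, quoted here as prior work. What the present paper does record about the method of \cite{L12} is in the discussion surrounding (\ref{1.3})--(\ref{1.7}): the linearized operator $L$, the fact that $v=\log|\nabla\hat u|$ is a subsolution of $L$ when $p\geq n$ (and, by the same computation specialized to $n=2$ and rerun for $p<2$, a supersolution when $1<p<2$, a solution only when $p=2$), and the fundamental inequality (\ref{1.7}), which holds in simply connected planar domains and renders $L$ locally uniformly elliptic so that Makarov--Bloch type machinery can be brought to bear. Your proposal shares the broad architecture — stream function $\hat v$, quasiregularity of $f=\hat u+i\hat v$, Bloch bound $|\nabla\tau|\lesssim 1/d(\cdot,\partial)$, Makarov LIL, and a Carleson integration-by-parts identity for the $\sigma$-finiteness in (a), the planar analogue of Lemma \ref{lemma3.4} and (\ref{3.30}) here — so the spirit is right.

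The gap is the hodograph step. You assert that $\tau=\log|\nabla\hat u|\circ z$ satisfies a constant-coefficient linear elliptic equation in the stream-function coordinates and is therefore harmonic in the transformed domain $\tilde G$ after a linear change of variables. This cannot be true as stated, and it is in tension with the dichotomy you are trying to prove. Re-running the computation after Lemma \ref{lemma2.6} for $n=2$ gives, when $\nabla\hat u(x)\neq 0$ is aligned with $x_1$,
\[
L\bigl(\log|\nabla\hat u|^2\bigr)(x) \;=\; 2(p-2)\,|\nabla\hat u|^{p-4}\bigl((p-1)\hat u_{x_1x_1}^2+\hat u_{x_1x_2}^2\bigr),
\]
whose sign is that of $p-2$; equality (harmonicity) occurs only when $p=2$. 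This strict one-sidedness is the structural fact that separates (a) from (b), and it survives any change of variables: if $\tau$ were genuinely a harmonic Bloch function in $\tilde G$, the two-sided Makarov LIL would force $\tau(\zeta_x^{\,r})$ to oscillate to both $\pm\infty$ at a.e.\ boundary point, and your own distortion estimate $\hat\mu(B(x,r))\asymp r\exp(c(p)\tau(\zeta_x^{\,r}))$ would then yield $\liminf_{r\to0}\hat\mu(B(x,r))/r=0$ at $\hat\mu$-a.e.\ $x$ for every $p\neq2$, which sits awkwardly beside the $\sigma$-finite concentration of (a). What the Carleson identity actually exploits in the regime $p>2$ is precisely the positive sign of $L\log|\nabla\hat u|$, not a symmetric harmonic structure; your write-up tacitly claims both at once. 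So the proposal should be amended: $\tau$ is not harmonic in the hodograph plane, it is a one-sided sub/supersolution of a uniformly elliptic operator (uniform ellipticity near $\partial O$ supplied by (\ref{1.7})), and the Makarov/Bloch estimates must be carried out for such functions — this generalization, together with the stopping-time argument, is exactly what makes the proof in \cite{L12} substantially harder than the proof of Theorem \ref{theorem1.1} given here.
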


\begin{remark}
Makarov in \cite{M85} (see also \cite{GM05,M90,P92}), essentially proved Theorem \ref{theorem1.3}  for
harmonic measure, $ \om, $   with respect to a  point in
   $ O $ (the $p = 2 $ case).  Moreover,  \cite{JW88}  showed for any planar domain  whose complement is a compact set
   and for which $ \om $ exists, that
   $ \hd \om \leq 1. $   Wolff \cite{W93}  improved this result by  showing that  for any planar domain  $ \om $ is  concentrated on a set of  $ \si $ finite  $ H^1 $ measure.
\end{remark}

In higher dimensions,  $ n \geq  3, $  Bourgain \cite{B87}  showed that $ \hd  \om <  n $ for any open set $ O $ for which
   $ \om $ exists.  Building on an idea of  Carleson in \cite{C85},  Wolff in \cite{W95} constructed in $ \rn{3}, $ a  \textit{Wolff snowflake} for which $ \hd \om > 2 $ and also one for which $ \hd \om < 2. $  This was further
    generalized in \cite{LVV05} where
   it was shown that both sides of  a  Wolff snowflake in $ \rn{n}$  could have  harmonic measures, say $ \om_1, \om_2, $ with  either $ \min ( \hd \om_1, \hd \om_2 ) > n - 1 $ or  $ \max ( \hd \om_1, \hd \om_2 ) < n - 1. $

 Theorem 4 of \cite{L94}  implies for fixed $p,$   $ 1 < p < \infty, $ and $ \hat u,
 \hat \mu $ as in (\ref{1.2})  that $ \hd \hat \mu <  n - \tau $ where  $ \tau = \tau ( p, n ) > 0. $
                               Theorem \ref{theorem1.1} was proved in \cite{LNV11}
   when  $ \rho = \infty  $ and  $ O  $ is a sufficiently    flat  Reifenberg domain.  Also Wolff's method was extended to the
   $ p $ harmonic setting and produced examples of Wolff type  snowflakes and $p$ harmonic functions $ u_\infty $   vanishing on the boundary of these snowflakes for which the corresponding  measures, say  $ \mu_\infty, $  had the following  Hausdorff dimensions.
   
\begin{theorem}  
\label{theorem1.4}
If  $ p \geq n, $  then  all examples produced by Wolff's method had 
\[
\hd \mu_\infty |_{B (0,1/2)}<n-1. 
\]
Moreover for   $ p > 2,  $ near enough 2,  there existed  a \textit{Wolff snowflake} for  which
\[
 \hd \mu_\infty|_{B (0, 1/2)}>n-1.
\]   
\end{theorem}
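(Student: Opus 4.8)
\emph{Strategy.} The plan is to carry out the $p$ Laplacian version of Wolff's snowflake construction and read off $\hd\mu_\infty|_{B(0,1/2)}$ from the resulting self-similar structure. This has three parts: building the domains together with the $p$ harmonic function $u_\infty$ vanishing on the snowflake boundary; extracting a Gibbs/Markov description of $\mu_\infty$ and a dimension formula; and evaluating that formula when $p\ge n$ and when $p$ is slightly larger than $2$.

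\emph{The snowflakes and $u_\infty$.} One starts from a single building block: a bounded Lipschitz domain $D_0$ with a distinguished ``bumpy'' boundary piece $\Ga_0$ over a unit cube in $\{x_n=0\}$, chosen so that $\Ga_0$ tiles, after rescaling by a fixed $\la\in(0,1)$, into $N$ congruent sub-copies of itself. Iterating the rule ``replace each flat face by a $\la$-scaled copy of $\Ga_0$'' produces a decreasing sequence $D_m\to D_\infty$ whose boundary $\Ga_\infty$ is self-similar and, near every boundary point and at every scale, contains a fixed chunk of a Lipschitz set, hence is locally uniformly $(n,r_0)$ thick. Because $\De_p$ is nonlinear, $u_\infty$ cannot be built by superposition; instead let $u_m$ be the $p$ capacitary function of $D_m$ (the $p$ harmonic function in $D_m\cap B(0,2)$ equal to $0$ on $\Ga_m$ and to $1$ on the outer boundary), use the uniform interior $C^{1,\al}$ estimates and the uniform boundary H\"older (Wiener) decay for $p$ harmonic functions vanishing on the uniformly fat boundary $\Ga_m$, and pass to a subsequential limit $u_\infty$, $p$ harmonic in $D_\infty\cap B(0,2)$ and vanishing on $\Ga_\infty$; let $\mu_\infty$ be the measure attached to $u_\infty$ by (\ref{1.2}).

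\emph{Self-similar structure and the dimension formula.} Index the generation-$m$ pieces of $\Ga_\infty$ by strings $\mathbf i=(i_1,\dots,i_m)$ over $\{1,\dots,N\}$; then $\Ga_{\mathbf i}$ is a $\la^m$-scaled copy of $\Ga_\infty$, and quantitative boundary Harnack inequalities for $p$ harmonic functions in Lipschitz domains show that $\mu_\infty(\Ga_{\mathbf i i})/\mu_\infty(\Ga_{\mathbf i})$ equals a fixed ``transition weight'' determined by the local bump configuration, up to multiplicative errors summable in $m$. Thus $\mu_\infty|_{B(0,1/2)}$ (restricting to $B(0,1/2)$ keeps one away from the outer boundary) is a Gibbs measure for the shift on $\{1,\dots,N\}^{\mathbb N}$, and a Billingsley-type lemma gives
\[
\hd\big(\mu_\infty|_{B(0,1/2)}\big)=\frac{h}{\ell},
\]
where $h$ is the entropy of the governing shift-invariant measure and $\ell>0$ is the Lyapunov exponent (in the single-scale model $\ell=\log(1/\la)$). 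For a flat or fully symmetric block the weights are uniform and $h/\ell=n-1$; the bumps are designed so that the weights fluctuate and $h/\ell$ moves away from $n-1$.

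\emph{The two regimes, and the main obstacle.} If $p\ge n$, then Theorem \ref{theorem1.1} applies to $\mu_\infty$ --- unconditionally when $p>n$, and when $p=n$ because $\Ga_\infty$ is locally uniformly $(n,r_0)$ thick --- so by Corollary \ref{corollary1.2}, $h/\ell=\hd\mu_\infty|_{B(0,1/2)}\le n-1$; the remaining point, to be checked on each example, is strictness, which holds because equality in the entropy--Lyapunov bound would force $\mu_\infty$ to be a constant multiple of $H^{n-1}|_{\Ga_\infty}$, i.e.\ $|\nabla u_\infty|$ constant along $\Ga_\infty$, impossible once the block carries a genuine bump. For the second assertion one starts from a snowflake that already works for harmonic measure ($p=2$), namely Wolff's example in $\rn{3}$ and its extension to all $n\ge3$ in \cite{LVV05}, for which $\hd\om|_{B(0,1/2)}>n-1$, and observes that the building-block data --- hence the transition weights and $h=h(p)$ --- depend continuously on $p$ near $p=2$; since $h(2)/\ell>n-1$, the strict inequality $h(p)/\ell>n-1$ persists for $p>2$ close to $2$. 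The main obstacles I expect are: (a) producing $u_\infty$ and $\mu_\infty$ with estimates uniform across the infinitely many scales of the snowflake --- this requires the H\"older/Wiener decay with a single exponent, which for $p=n$ is exactly the content of the $(n,r_0)$ thickness hypothesis; (b) justifying the Gibbs/Markov description, which needs boundary Harnack principles for $p$ harmonic functions whose multiplicative errors are summable over generations; and (c) for the second regime, establishing continuity of $\hd\mu_\infty$ in $p$ near $2$, i.e.\ continuous up-to-the-boundary dependence, gradient included, of the $p$ capacitary function on the parameter $p$.
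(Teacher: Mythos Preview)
The paper does not prove Theorem \ref{theorem1.4}; it is quoted from \cite{LNV11} as background and motivation for Theorem \ref{theorem1.1}. So there is no ``paper's own proof'' to compare against, and I will comment on your proposal directly.

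Your outline for the second assertion (perturb from Wolff's $p=2$ example and use continuity of the building-block data in $p$) is essentially the argument in \cite{LNV11}, and the obstacles you flag under (c) are the right ones.

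For the first assertion, however, there is a genuine gap. You invoke Corollary \ref{corollary1.2} to obtain $\hd\mu_\infty\le n-1$ and then try to upgrade to strict inequality by saying that equality in ``the entropy--Lyapunov bound'' would force $\mu_\infty$ to be a constant multiple of $H^{n-1}|_{\Ga_\infty}$, i.e.\ $|\nabla u_\infty|$ constant on $\Ga_\infty$. This step does not work. The formula $\hd=h/\ell$ is an identity, not a bound, and $h/\ell=n-1$ does \emph{not} force the transition weights to be uniform unless the similarity dimension $\log N/\log(1/\la)$ of $\Ga_\infty$ already equals $n-1$; in the Wolff construction that dimension is typically strictly larger than $n-1$, so $H^{n-1}|_{\Ga_\infty}$ is not even $\si$ finite, and the phrase ``$\mu_\infty$ a constant multiple of $H^{n-1}|_{\Ga_\infty}$'' has no content. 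Likewise ``$|\nabla u_\infty|$ constant along $\Ga_\infty$'' is not meaningful, since $u_\infty$ is not $C^1$ up to the fractal boundary.

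What \cite{LNV11} actually does is closer to your thermodynamic picture but bypasses Theorem \ref{theorem1.1} entirely: Wolff's machinery produces a formula in which the sign of $\hd\mu_\infty-(n-1)$ coincides with the sign of an integral of the type $\int_{\ar D_0}|\nabla u_0|^{p-1}\log|\nabla u_0|\,dH^{n-1}$ over the (smooth) building block. The subsolution inequality (\ref{1.6}), $L(\log|\nabla u|)\ge0$ for $p\ge n$, combined with an integration-by-parts argument of the same flavor as Lemma \ref{lemma3.4}, forces that integral to be strictly negative for any nontrivial bump; this is what yields the \emph{strict} inequality $\hd\mu_\infty<n-1$ for every example. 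In other words, (\ref{1.6}) is used directly on the building block, not routed through the $\si$-finiteness theorem of the present paper. Your proposal identifies the right ingredients (the snowflake limit, boundary Harnack, a dynamical dimension formula) but substitutes a non-argument at precisely the step where (\ref{1.6}) does the work.
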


In view of  Theorem \ref{theorem1.4} and  the above results  it is natural to conjecture that Theorem \ref{theorem1.1} 
remains valid for $ p = n $ without the uniform
fatness assumption   on  $ \ar O \cap B ( \hat z,  \rho). $   A slightly wilder conjecture is that there exists $ p_0, 2 < p_0  < n, $
such that if $ p_0 \leq p $ and  $ \hat u,  \hat \mu,  $  are the $ p $ harmonic function-corresponding  
measure as in (\ref{1.2}),  then  $ \hd \hat \mu \leq n - 1. $

As for our proof of Theorem \ref{theorem1.1}, here we first remark that it is embarrassingly simple compared to the proof in  Theorem 1$(a)$ of
   \cite{L12}.  Moreover  the main idea for the proof comes from  \cite{W93} where a simple proof for harmonic measure  in planar
    domains, whose boundaries are uniformly fat in the sense of logarithmic capacity, is outlined.  Our   proof also makes
    important use of
    work in \cite{L12} and \cite{LNV11}.   More specifically suppose for fixed $ p, 1 < p < \infty, $  that $ \hat u,  \hat \mu, O,
     \hat z, \rho $  are as in (\ref{1.2}). Then from Lemma \ref{lemma2.4} we see that $ \hat u_{x_k}, 1 \leq k \leq n, $ are  H\"{o}lder continuous in
  $ O \cap B ( \hat z, \rho ). $
  If also   $ \hat x \in O \cap B ( \hat z, \rho )$  and $ \nabla \hat u ( \hat x ) \not = 0, $  then $ \hat u $ is infinitely differentiable  in
      $B ( \hat x, \de ) $ for some $ \de > 0. $  Let $  \xi \in \ar B ( 0, 1 ) $  differentiating the $  p $  Laplace equation,
    $ \nabla \cdot ( |\nabla \hat u |^{p-2} \nabla \hat u ) = 0 $      with respect to $ \xi $ it follows that
     both $ \zeta = \hat u_\xi $ and $\zeta = \hat u, $ satisfy  the divergence
     form PDE  for $ x $ in $ B ( \hat x, \de ) $:

     \begin{equation}
     \label{1.3}
     L \ze  ( x ) \, = \,  \sum_{i,k = 1 }^n \,
    \frac{ \ar }{ \ar x_i} \, [\,   b_{i k }
    ( x )  \ze_{x_k} ( x )  \, ]  = 0,  \, 
    \end{equation}
    where  at $ x  $
    \begin{equation}
    \label{1.4}
     b_{ik} ( x )  = | \nabla \hat u |^{ p - 4} [ ( p - 2 )
    \hat u_{x_i} \hat u_{x_k}  + \de_{ik}
    | \nabla \hat u |^2 ] ( x ) ,  \, 1 \leq i,  k  \leq  n, 
    \end{equation}  
    and $
    \de_{ik} $ is
    the Kronecker $ \de. $
     From smoothness of  $ \hat u $  we see that
     $ b_{ik} $
    are infinitely differentiable in
    $ B (\hat  x, \de )  $ and   at $ x \in B ( \hat x,  \de ), $
     \begin{equation}  
     \label{1.5}   
     \min \{ p - 1, 1 \}    | \xi |^2 \, | \nabla \hat u ( x ) |^{ p -
    2} \, \leq \,  { \ds  \sum_{i,k=1}^n  \, b_{ik} \, \xi_i \xi_k }  \,
     \leq \,  \max \{ 1, p - 1 \}    | \nabla \hat u ( x ) |^{ p - 2} \, | \xi |^2
      \, . 
      \end{equation}
    The PDE in (\ref{1.3})  for  $ \hat u, \hat u_{x_k}, 1  \leq k \leq n, $ was used in  Lemma 5.1 of \cite{LNP11} 
    to show that if  $ v  =  \log | \nabla \hat u | $ and
    $ \nabla \hat u ( \hat x ) \not =  0, $ then for $ x \in   B ( \hat x, \de ), $
    \begin{equation}
    \label{1.6}  
    L \, v ( x )  \geq 0\, \,   \mbox{when}\, \,   p \geq n.  
    \end{equation}
 (\ref{1.3})-(\ref{1.6}) are  used throughout \cite{BL05,L06,LNP11,LNV11}.  Another key inequality
  in these papers  was called  the fundamental inequality:
 \begin{equation}  
 \label{1.7}
    \frac{1}{c} \,   | \nabla \hat u ( x ) |
    \leq \, \frac{\hat u( x )}{d ( x, \ar \Om)} \, \leq c | \nabla \hat u ( x ) |,   
    \end{equation}
     where $ c = c ( n, p )$. (\ref{1.7})  was shown to hold for all $ x $ near $ \ar O $ in 
     the special domains considered in Theorems \ref{theorem1.3}, \ref{theorem1.4}.  Observe that if (\ref{1.7}) 
     holds, then from (\ref{1.5})  it follows that $ L $ is locally a  uniformly elliptic operator.  
     Hence in these papers results from  elliptic PDE were used.

                                        The upper inequality in (\ref{1.7}) follows 
                                        from PDE type estimates and is true for $ O $ as in Theorem \ref{theorem1.1}.  
                                        However the lower estimate is easily seen to fail when $ \ar O $ is not connected.  
                                        Thus we  are not able to use either of the strategies in \cite{L12} or \cite{LNP11}
                                        in our proof of  Theorem \ref{theorem1.1}.  The argument in section \ref{section3} essentially uses only
           (\ref{1.3}) - (\ref{1.6}) and the basic estimates for $p$ harmonic functions in section \ref{section2}.

            As for the plan of this paper, in section \ref{section2} we list some basic estimates for $p$ harmonic functions. 
            In section \ref{section3} we use these estimates and (\ref{1.3})-(\ref{1.6}) to prove Theorem \ref{theorem1.1}.  Finally in section \ref{section4} we make closing remarks and  discuss future research.

\section{Basic Estimates  for $p$ Harmonic Functions.}
\label{section2}
In  the sequel  $ c $ will denote a  positive constant  $ \geq 1  $ (not
necessarily the same at each occurrence), which may depend only on $ p, n, $ unless otherwise stated.
In general, $ c ( a_1, \dots, a_n )$ denotes a positive constant
$ \geq 1, $  which may depend only on $ p, n, a_1, \dots, a_n$ not
necessarily the same at each occurrence.   $ A \approx B $ means
that  $  A/B $ is bounded above and below by positive constants
depending only on $ p, n. $
 In this section, we  will   always assume  that $ 2 \leq  n  \leq p < \infty , $  and $  r > 0. $  
Let $ \Om $ be an open set, $ w \in \ar \Om, $  and suppose that $ \ti u $ is $ p $ harmonic 
in $ \Om \cap B ( w, 4 r ). $ If
 $ p = n $  we also assume that $ \ar \Om \cap \bar B (w,4 r) $ is
 $(n, r_0) $ uniformly fat as defined above (\ref{1.1}).

     We begin by stating  some
interior and boundary estimates for $ \ti u, $  a positive weak solution to
the $ p $ Laplacian  in $ \Om  \cap B ( w,  4 r ) $  with
$ \ti u  \equiv 0 $ on $ \ar \Om \cap B ( w, 4r )$  in the Sobolev sense,
  as indicated after  (\ref{1.1}).    Extend $ \ti u $ to $ B ( w, 4 r ) $
by putting $ \ti u \equiv 0 $ on $ B ( w, 4 r )\sem \Om. $ Then there exists
a locally finite positive Borel measure $ \ti \mu $ with
support $ \subset \ar \Om \cap \bar B ( w, 4 r ) $  and  for which
(\ref{1.2}) holds with $ \hat u $ replaced by $ \ti u $  and $ \ph
\in C_0^\infty ( B ( w, 4 r ) ). $
  Let $ { \ds \max_{B ( z, s )}  \ti u, \,  \min_{B ( z, s)} \ti u } $ be the
 essential supremum and infimum  of $ \ti u $ on $ B ( z, s) $
whenever $ B ( z, s ) \subset B ( w, 4 r ). $
 For proofs  of Lemmas \ref{lemma2.1} - \ref{lemma2.2} (see \cite[Chapters 6 and 7]{HKM06}).

\begin{lemma}
\label{lemma2.1}
Fix $ p, 1 < p < \infty, $ and
let $\Om,  w, r, \ti u,   $ be as above.
   Then  
   \[
   \frac{1}{c}  r^{ p - n} \, \int\limits_{B ( w, r/2)}  \,  | \nabla \ti u
|^{ p } \,  dx  \, \leq \, \max_{ B ( w, r ) } \, \ti u^p  \, \, \leq
\frac{c}{r^{n}} \, \int\limits_{B ( w, 2 r ) } \, \ti u^p \, dx.  
\]  
If
$ B ( z,   2s  ) \subset \Om, $ then
\[ \max_{B ( z , s  ) } \, \ti u  \, \leq  c \min_{ B ( z , s  )} \ti u. \] \end{lemma}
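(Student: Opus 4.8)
The plan is to establish the three asserted estimates one at a time; all are classical for the $p$ Laplace operator and are carried out in detail in Chapters~6 and 7 of \cite{HKM06}, so here I indicate only the mechanism. Throughout I work with the zero extension of $\tilde u$ to $B(w,4r)$. Since $\tilde u\ge 0$ is $p$ harmonic in $\Omega\cap B(w,4r)$ and vanishes on $\partial\Omega\cap B(w,4r)$ in the Sobolev sense, this extension is a nonnegative $p$ subsolution in all of $B(w,4r)$ (a standard fact, used repeatedly in \cite{L12,LNV11}); that is what allows the interior De~Giorgi--Moser machinery to be applied across $\partial\Omega$. Note also that the uniform $(n,r_0)$ fatness hypothesis plays no role in Lemma~\ref{lemma2.1} itself --- it is needed only for the boundary regularity in the lemmas that follow --- which is why the statement is valid for all $p\in(1,\infty)$.

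\paragraph{} For the left-hand Caccioppoli inequality, fix $\phi\in C_0^\infty(B(w,r))$ with $\phi\equiv 1$ on $B(w,r/2)$ and $|\nabla\phi|\le c/r$, and insert $\theta=\tilde u\,\phi^p$ into the weak formulation~(\ref{1.1}) with $\hat u$ replaced by $\tilde u$; this is legitimate since $\tilde u\,\zeta\in W_0^{1,p}(\Omega\cap B(w,4r))$ for $\zeta\in C_0^\infty(B(w,4r))$. Expanding $\nabla\theta=\phi^p\nabla\tilde u+p\,\tilde u\,\phi^{p-1}\nabla\phi$ and moving the cross term to the other side gives $\int|\nabla\tilde u|^p\phi^p\,dx=-p\int\tilde u\,\phi^{p-1}|\nabla\tilde u|^{p-2}\langle\nabla\tilde u,\nabla\phi\rangle\,dx$; Young's inequality on the right absorbs a small multiple of $\int|\nabla\tilde u|^p\phi^p\,dx$ and leaves $\int_{B(w,r/2)}|\nabla\tilde u|^p\,dx\le c\,r^{-p}\int_{B(w,r)}\tilde u^p\,dx\le c\,r^{n-p}\max_{B(w,r)}\tilde u^p$. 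Multiplying by $r^{p-n}$ is the claim.

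\paragraph{} For the right-hand local boundedness inequality and for the Harnack inequality I would invoke the standard Moser iteration. Since the zero extension of $\tilde u$ is a nonnegative $p$ subsolution on $B(w,4r)$, testing against $\tilde u^{\beta}$ times cutoffs, applying the Sobolev inequality, and iterating over a nested sequence of balls shrinking from $B(w,2r)$ to $B(w,r)$ yields $\sup_{B(w,r)}\tilde u\le c\,(r^{-n}\int_{B(w,2r)}\tilde u^p\,dx)^{1/p}$, which is the stated bound after raising to the $p$th power. For the Harnack inequality, $\tilde u$ is a \emph{positive} $p$ harmonic function in $B(z,2s)\subset\Omega$, so applying Moser iteration to $\tilde u$ and $\tilde u^{-1}$ --- equivalently, combining the sup bound for the subsolution $\tilde u$ with the weak Harnack (logarithmic estimate plus John--Nirenberg) for the supersolution $\tilde u$ --- gives $\max_{B(z,s)}\tilde u\le c\min_{B(z,s)}\tilde u$, a finite Harnack chain of balls compactly contained in $B(z,2s)$ removing any residual gap between the radii.

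\paragraph{} The point I expect to require the most care is purely the bookkeeping inside the Moser iteration --- the choice of the exponent sequence $\beta_k$, the radii of the nested balls, and checking that the iterated constant stays finite --- together with the elementary chaining for Harnack; none of this is conceptually difficult, which is precisely why it suffices to cite \cite{HKM06}. The only slightly delicate structural issue is that each estimate must survive the zero extension across $\partial\Omega$, and this is immediate once one observes that extending by zero a nonnegative solution with zero Sobolev trace on $\partial\Omega\cap B(w,4r)$ produces a subsolution.
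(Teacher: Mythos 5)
Your proposal is correct and takes essentially the same route as the paper, which simply cites Chapters~6 and 7 of \cite{HKM06} for Lemmas~\ref{lemma2.1}--\ref{lemma2.2}; your sketch accurately reproduces the standard Caccioppoli, De~Giorgi--Moser, and Harnack-chain arguments from that reference, including the key observation that the zero extension is a nonnegative $p$ subsolution across $\partial\Omega$ and that uniform fatness is not needed here.
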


\begin{lemma}   
\label{lemma2.2}
Let $ p, \Om,  w, r, \ti u,   $ be as in
Lemma \ref{lemma2.1}.  Then
there exists $ \al
= \al ( p, n )  \in (0, 1) $  such that  $ \ti u $ has a  H\"{o}lder
$ \al $ continuous representative in  $  B ( w,    4r )  $ (also denoted $ \ti u$).
 Moreover  if $ z_1, z_2 \in B ( w, r )
$  then
\[
| \ti u ( z_1 ) - \ti u ( z_2 ) | \leq \, c \,  \left( \frac{|z_1 - z_2 |}{ r }\right)^\al  \,
\max_{ B ( w, 2 r) } \,  \ti u 
\]  
\end{lemma}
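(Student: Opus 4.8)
The plan is to obtain Lemma~\ref{lemma2.2} by combining the interior Harnack inequality of Lemma~\ref{lemma2.1} with a boundary oscillation estimate; the dichotomy $p>n$ versus $p=n$, and with it the fatness hypothesis, enters only in the latter, and everything below is classical (see \cite[Chapters 6 and 7]{HKM06}). For the interior piece I would first note that $p$ harmonicity is preserved under adding constants, so for $B(z,2s)\subset\Om\cap B(w,4r)$ both $M-\ti u$ and $\ti u-m$ are nonnegative $p$ harmonic in $B(z,2s)$, where $M=\max_{B(z,2s)}\ti u$, $m=\min_{B(z,2s)}\ti u$. Applying the Harnack inequality of Lemma~\ref{lemma2.1} to each on $B(z,s)$ and adding the two resulting inequalities gives
\[
\mathrm{osc}_{B(z,s)}\ti u\ \leq\ \ga\,\mathrm{osc}_{B(z,2s)}\ti u,\qquad \ga=\ga(p,n)\in(0,1),
\]
whenever $B(z,2s)\subset\Om\cap B(w,4r)$. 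Iterating on dyadic radii yields $\mathrm{osc}_{B(z,s)}\ti u\leq c\,(s/R)^{\al_0}\max_{B(z,R)}\ti u$ for $0<s\leq R$ and $B(z,R)\subset\Om\cap B(w,4r)$, with $\al_0=\al_0(p,n)\in(0,1)$.

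\textbf{Boundary decay.} Next, fix $\ze\in\ar\Om$ with $B(\ze,2s)\subset B(w,4r)$; the extension of $\ti u$ by $0$ lies in $W^{1,p}(B(w,4r))$ and vanishes quasi-everywhere on $\bar B(\ze,s)\sem\Om$. The tools are the Caccioppoli inequality $\int_{B(\ze,s)}|\nabla\ti u|^p\,dx\leq c\,s^{-p}\int_{B(\ze,2s)}\ti u^p\,dx$ (test the weak equation against $\ti u\,\eta^p$ for a cutoff $\eta$), and the capacitary Poincar\'e inequality: if $v\in W^{1,p}(B(\ze,2s))$ vanishes on $F\subset\bar B(\ze,s)$, then
\[
\int_{B(\ze,s)}|v|^p\,dx\ \leq\ \frac{c\,s^p}{C_p\big(F,B(\ze,2s)\big)}\int_{B(\ze,2s)}|\nabla v|^p\,dx,
\]
$C_p$ being the $p$ capacity, with $C_p(\bar B(\ze,s),B(\ze,2s))\approx s^{n-p}$. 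Running the De Giorgi truncation iteration for the subsolution $\ti u$ with $F=\bar B(\ze,s)\sem\Om$ then produces $\mathrm{osc}_{B(\ze,s/2)}\ti u\leq\ti\ga\,\mathrm{osc}_{B(\ze,s)}\ti u$ with $\ti\ga<1$, \emph{provided} the capacity density $C_p(\bar B(\ze,s)\sem\Om,B(\ze,2s))\,/\,C_p(\bar B(\ze,s),B(\ze,2s))$ stays bounded below. If $p>n$ this is automatic, since a point already has positive $p$ capacity, $C_p(\{\ze\},B(\ze,2s))\approx s^{n-p}$; if $p=n$ it is exactly the assumed $(n,r_0)$ uniform fatness of $\ar\Om\cap\bar B(w,4r)$ (with $C=C_n$ the capacity of the introduction). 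Iterating, $\mathrm{osc}_{B(\ze,s)}\ti u\leq c\,(s/R)^{\al_1}\max_{B(\ze,R)}\ti u$ for $0<s\leq R$ and $B(\ze,2R)\subset B(w,4r)$, where $\al_1\in(0,1)$ depends on $p,n$ (and on $\be$ when $p=n$).

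\textbf{Gluing.} Put $\al=\min(\al_0,\al_1)$ and take $z_1,z_2\in B(w,r)$, $\de=|z_1-z_2|$. If $\de$ exceeds a fixed fraction of $r$ the asserted inequality is trivial, since $|\ti u(z_1)-\ti u(z_2)|\leq2\max_{B(w,2r)}\ti u$; so assume $\de$ small. Let $d_j=d(z_j,\ar\Om)$ (if some $z_j$ lies in the interior of the complement of $\Om$, then $\ti u\equiv0$ near it and the case reduces to $z_j\in\ar\Om$). If $\de\leq\tfrac14\min(d_1,d_2)$, apply the interior estimate in $B(z_1,\rho)$ with $\rho=\tfrac12\min(d_1,r)$ and bound $\max_{B(z_1,\rho)}\ti u$ by the boundary estimate at a boundary point $\ze_1$ nearest $z_1$ (noting $B(z_1,\rho)\subset B(\ze_1,3\rho)$), interpolating the two power bounds; otherwise $\max(d_1,d_2)\leq c\,\de$, both points lie in $B(\ze_1,c\de)$, and the boundary estimate there together with $\ti u(\ze_1)=0$ finishes it. In every case only balls of radius at most $c\,r$ centered in $B(w,2r)$ are used, which yields the normalizing factor $\max_{B(w,2r)}\ti u$ and the bound $|\ti u(z_1)-\ti u(z_2)|\leq c\,(\de/r)^\al\max_{B(w,2r)}\ti u$, as claimed.

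\textbf{The main difficulty} is the boundary decay when $p=n$: some form of uniform fatness of $\ar\Om$ cannot be dispensed with, because an isolated boundary point has zero $n$ capacity and is removable for bounded $n$ harmonic functions, so $\ti u$ need not even be continuous up to $\ar\Om$; the fatness hypothesis is used precisely to keep the constant in the capacitary Poincar\'e inequality, hence the decay factor $\ti\ga$, uniform over all boundary points and all scales $\leq r_0$. For $p>n$ this difficulty disappears, since points carry positive $p$ capacity (equivalently $W^{1,p}\hookrightarrow C^{0,\,1-n/p}$), which is why no thickness assumption is needed there.
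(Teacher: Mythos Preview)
The paper does not give its own proof of Lemma~\ref{lemma2.2}; it simply refers the reader to \cite[Chapters 6 and 7]{HKM06}. Your sketch is a correct and faithful unpacking of precisely that classical argument: interior oscillation decay via Harnack applied to $M-\ti u$ and $\ti u-m$, boundary oscillation decay via a capacitary Poincar\'e/De Giorgi iteration (with the $p>n$ versus $p=n$ dichotomy appearing exactly where you put it), and a standard gluing. So your approach and the paper's coincide.

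One small remark: the paper writes $\al=\al(p,n)$, but as you correctly note, when $p=n$ the exponent and constant also depend on the fatness constant $\be$ (and on $r_0$ through the range of scales). This is a harmless bit of looseness in the paper's statement, not a gap in your argument.
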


\begin{lemma}
\label{lemma2.3}
Let $p, \Om,  w, r, \ti u,$ be as in Lemma \ref{lemma2.1}  and let  $ \ti \mu $  be the measure
associated with $ \ti u $ as in (\ref{1.2}).
Then there exists $ c,  \ga  = \ga (p, n ) \geq 1, $
 such that
\[
\frac{1}{c} \, r^{ p - n } \, \ti \mu [ B ( w,  r/2 ) ]  \,
\leq \, { \ds   \max_{B ( w,  r ) } \, \ti u^{ p - 1}   } \,
 \, \leq \,
 c \,    r^{ p - n } \, \ti  \mu [ B ( w,  2 r  ) ].  \, 
 \] 
\end{lemma}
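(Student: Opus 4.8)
The plan is to prove the two inequalities separately: the left-hand one is an energy (Caccioppoli) estimate and does not use the fatness hypothesis, while the right-hand one records the non-degeneracy of $\ti\mu$ forced by the vanishing of $\ti u$ on $\ar\Om$, and it is there that local uniform fatness enters when $p=n$. For the left-hand inequality, cover $\bar B(w,r/2)$ by $N=N(n)$ balls $B(x_j,r/16)$ with $x_j\in\bar B(w,r/2)$, and for each $j$ choose $\ph_j\in C_0^\infty(B(x_j,r/8))\subset C_0^\infty(B(w,4r))$ with $\ph_j\equiv1$ on $B(x_j,r/16)$, $0\le\ph_j\le1$, $|\nabla\ph_j|\le c/r$. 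Inserting $\ph_j$ into (\ref{1.2}) (with $\hat u$ replaced by $\ti u$) and using $\ph_j\ge0$,
\[
\ti\mu[B(x_j,r/16)]\le\int\ph_j\,d\ti\mu=-\int|\nabla\ti u|^{p-2}\lan\nabla\ti u,\nabla\ph_j\ran\,dx\le\frac{c}{r}\int_{B(x_j,r/8)}|\nabla\ti u|^{p-1}\,dx.
\]
By H\"older's inequality with exponents $p/(p-1)$ and $p$ the last term is at most $c\,r^{n/p-1}\bigl(\int_{B(x_j,r/8)}|\nabla\ti u|^p\,dx\bigr)^{(p-1)/p}$, and the first inequality of Lemma \ref{lemma2.1} applied at $x_j$ at radius $r/4$ gives $\int_{B(x_j,r/8)}|\nabla\ti u|^p\,dx\le c\,r^{n-p}\max_{B(w,r)}\ti u^p$. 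Since $-1+\tfrac{n}{p}+\tfrac{(n-p)(p-1)}{p}=n-p$, combining these bounds yields $\ti\mu[B(x_j,r/16)]\le c\,r^{n-p}\max_{B(w,r)}\ti u^{p-1}$; summing over $j$ gives the left-hand inequality.

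For the right-hand inequality it suffices to bound $\ti u(x)^{p-1}$ for each $x\in B(w,r)$ by $c\,r^{p-n}\ti\mu[B(w,2r)]$. By Lemma \ref{lemma2.2}, $\ti u$ has a continuous representative on $B(w,4r)$; since $\ti u\equiv0$ on $B(w,4r)\sem\Om$ while $w\in\ar\Om$, this representative vanishes at $w$, so $\inf_{B(w,\rho)}\ti u=0$ for every $\rho\le r$. This is the only place the fatness is used when $p=n$: it is exactly the Wiener-type condition that makes Lemma \ref{lemma2.2} — boundary H\"older continuity — available, whereas for $p>n$ continuity up to $\ar\Om$ is automatic from $W^{1,p}\hookrightarrow C^{0,\al}$. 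Since the operator $\De_p u=\nabla\cdot(|\nabla u|^{p-2}\nabla u)$ is odd and satisfies $\De_p(\la v)=|\la|^{p-2}\la\,\De_p v$, the function $M_0-\ti u$ (with the zero-extension of $\ti u$, and $M_0=\max_{\bar B(w,2r)}\ti u$) is a nonnegative $p$-supersolution in $B(w,2r)$ with Riesz measure $\ti\mu$. Feeding $\inf\ti u=0$ into the upper Wolff-potential estimate for nonnegative $p$-supersolutions (Kilpel\"{a}inen--Mal\'{y}) gives, for $x\in B(w,r)$,
\[
\ti u(x)\le c\int_0^{cr}\Bigl(\frac{\ti\mu[B(x,t)]}{t^{\,n-p}}\Bigr)^{\frac{1}{p-1}}\frac{dt}{t}.
\]
If $p>n$, the exponent of $t$ in the integrand exceeds $-1$, so the crude bound $\ti\mu[B(x,t)]\le\ti\mu[B(w,2r)]$ already gives $\ti u(x)\le c\,(r^{p-n}\ti\mu[B(w,2r)])^{1/(p-1)}$. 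If $p=n$ this integral diverges under the crude bound, and one uses the fatness again: comparing $\ti u$ with $(\max_{\bar B(w,2\rho)}\ti u)(1-V_\rho)$, where $V_\rho$ is the $n$-capacitary potential of $\ar\Om\cap\bar B(w,\rho)$ relative to $B(w,2\rho)$ (whose capacitary measure has total mass $=C(\ar\Om\cap\bar B(w,\rho),B(w,2\rho))\ge\be$), produces the scale-uniform lower bound $\ti\mu[B(w,\rho)]\ge c^{-1}\be\,(\max_{B(w,\rho/2)}\ti u)^{n-1}$ for $\rho\le r$ together with geometric decay of $\ti u$ toward $w$; summed over dyadic scales these turn the divergent integral into $c\,\ti\mu[B(w,2r)]^{1/(n-1)}$. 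A routine covering/rescaling reconciles the radii $r$, $2r$ with those arising along the way; equivalently one may finish by passing from $\bigl(r^{-n}\int_{B(w,2r)}\ti u^p\,dx\bigr)^{1/p}$ back to $\max_{B(w,r)}\ti u$ via the local-boundedness half of Lemma \ref{lemma2.1}.

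The step I expect to be the main obstacle is the right-hand inequality, above all its $p=n$ case. In the linear case $p=2$ one splits $\ti u$ into a Newtonian potential of $\ti\mu$ plus a nonnegative harmonic remainder; since $\De_p$ is nonlinear, superpositions of fundamental solutions are not supersolutions, and the substitute — that a nonnegative $p$-supersolution is dominated by the Wolff potential of its Riesz measure — is itself a nontrivial fact. For $p>n$ that potential converges and closes the argument, but for $p=n$ it does not, and one must genuinely exploit the uniform fatness of $\ar\Om\cap\bar B(w,4r)$, through $n$-capacitary comparison at every scale, to supply the scale-uniform lower bound on $\ti\mu$ of small boundary-centered balls that the bare potential estimate cannot give. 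The remaining ingredients — the cut-off and H\"older computation, the covering argument, the passage from $L^p$-averages to suprema, and the algebra of $\De_p$ — are routine given Lemmas \ref{lemma2.1}--\ref{lemma2.2}.
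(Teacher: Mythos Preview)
The paper does not give a proof of this lemma: it simply refers the reader to \cite{KZ03} and then remarks that the left-hand inequality holds for any open $\Om$ when $p\ge n$, while the right-hand inequality is what forces the uniform fatness hypothesis when $p=n$. So there is no in-paper argument to compare against.

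Your outline is essentially the standard route and is consistent with that remark. The left-hand inequality is exactly a Caccioppoli/test-function computation; your H\"older step and the exponent arithmetic $-1+\tfrac{n}{p}+\tfrac{(n-p)(p-1)}{p}=n-p$ are correct. One small quibble: Lemma~\ref{lemma2.1} is stated with center $w\in\ar\Om$, so invoking it verbatim ``at $x_j$'' is formally illicit; but the energy half of that lemma is just the usual Caccioppoli inequality for $p$-harmonic functions (valid at any center once $\ti u$ is extended by zero), so the fix is cosmetic. For the right-hand inequality your appeal to the Kilpel\"ainen--Mal\'y upper Wolff-potential estimate for the nonnegative $p$-supersolution $M_0-\ti u$ is the right substitute for linear potential theory, and your diagnosis of why $p=n$ is special --- the Wolff integral diverges under the crude bound, and one must use fatness at every scale, via capacitary comparison, to get the missing decay --- matches the paper's own comment that this inequality ``is the main reason we have this assumption in Theorem~\ref{theorem1.1}.'' One wording issue: you say fatness is used ``only'' to obtain the boundary H\"older continuity of Lemma~\ref{lemma2.2}, but you (correctly) use it a second time, and more essentially, in the capacitary comparison step; the continuity alone would not close the $p=n$ case.
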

For the proof of Lemma \ref{lemma2.3} see \cite{KZ03}.
The left-hand side of the above inequality is  true for any open $ \Om $ and $p \geq n. $  However the right-hand side
of this inequality  requires uniform fatness when $ p = n $ and is the main  reason  we have this assumption in Theorem \ref{theorem1.1}.
 \noindent  The reader is referred to \cite{BL05} for references concerning the proof of  the next lemma.
 
 \begin{lemma}
 \label{lemma2.4}
 Let $ p, \Om, w,  r, \ti u,$ be as in Lemma \ref{lemma2.1}.  Then $\ti u$ has a representative in
     $ W^{1,p} ( B(w,4r)) $  with H\"{o}lder
    continuous partial derivatives in $ \Om \cap B(w,4r)$. In particular, there exists $ \si   \in (0,1] $, depending only on $ p,
    n$, such that if  $ x, y \in B ( \hat w,  \hat r/2 )  $, $B ( \hat w,  4\hat r )\subset \Omega\cap B (w,4r)$, then
    \[
    \frac{1}{c} \, | \nabla  \ti u ( x ) - \nabla  \ti  u ( y ) |  \,
    \leq  \,  \left(\frac{|x-y|}{\hat r}\right)^\si \, \max_{B ( \hat w , \hat r  )} \, | \nabla   \ti u |  \,
    \leq \, \frac{c}{\hat r} \,   \left( \frac{| x  - y |}{\hat r}\right)^\si  \,
    \max_{B (\hat  w,  2\hat r  )}    \ti  u. 
    \]  
    If  $ x \in  B ( \hat  w, 4 \hat r ) $  and   $ \nabla \ti u ( x ) \not = 0, $  then  $ \ti u $ is infinitely differentiable in an open  neighborhood of  $ x. $
     Moreover,  \[ \int\limits_{ B ( \hat w , \hat r  ) \cap \{ | \nabla  \ti u | > 0 \}  } \, | \nabla  \ti  u |^{p-2}   \,
     \sum_{i,j = 1}^n \ti u_{x_i x_j}^2  dx   \leq \, \frac{c} {\hat r^{2}}
       \int\limits_{ B ( \hat w ,  2  \hat r  ) } \, | \nabla  \ti  u |^{p} \, dx. \]
       \end{lemma}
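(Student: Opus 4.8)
\medskip
\noindent\textbf{Proof proposal.}
The plan is to establish the interior $ C^{1,\si} $ regularity of $ \ti u $ by the standard regularization (vanishing-viscosity) scheme and then to read off the pointwise gradient bound and the second-derivative Caccioppoli inequality from estimates that are uniform in the regularization parameter; the local $ C^\infty $ statement is then a bootstrap, and membership in $ W^{1,p} ( B ( w, 4 r ) ) $ is already part of the section's set-up, so the content of the first sentence is that $ \nabla \ti u \in C^{0,\si}_{\mathrm{loc}} ( \Om \cap B ( w, 4 r ) ) $. Fix $ B ( \hat w, 4 \hat r ) \subset \Om \cap B ( w, 4 r ) $, on which $ \ti u $ is $ p $ harmonic. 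For $ \ep \in ( 0, 1 ) $ let $ u_\ep $ minimize $ v \mapsto \int_{ B ( \hat w, 4 \hat r ) } ( \ep + | \nabla v |^2 )^{ p/2 } \, dx $ over $ v $ with $ v - \ti u \in W^{1,p}_0 ( B ( \hat w, 4 \hat r ) ) $, equivalently
\[
\nabla \cdot \big( ( \ep + | \nabla u_\ep |^2 )^{ (p-2)/2 } \, \nabla u_\ep \big) = 0 \quad \text{in } B ( \hat w, 4 \hat r ).
\]
For each fixed $ \ep $ this equation is nondegenerate and has smooth structure, so classical theory (see the references in \cite{BL05}) gives $ u_\ep \in C^\infty ( B ( \hat w, 4 \hat r ) ) $, and a standard convexity/minimization argument gives $ u_\ep \to \ti u $ in $ W^{1,p} ( B ( \hat w, 4 \hat r ) ) $ as $ \ep \to 0 $.

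Next I would derive the estimates uniformly in $ \ep $. Differentiating the regularized equation in $ x_k $ shows each $ w_k := ( u_\ep )_{x_k} $ solves $ \sum_{i,j} \ar_i ( A_\ep^{ij} \ar_j w_k ) = 0 $, where the symmetric matrix $ A_\ep = ( \ep + | \nabla u_\ep |^2 )^{ (p-2)/2 } \big( \mathrm{Id} + (p-2) \tfrac{ \nabla u_\ep \otimes \nabla u_\ep }{ \ep + | \nabla u_\ep |^2 } \big) $ satisfies, since $ p \geq 2 $,
\[
| \nabla u_\ep |^{ p - 2 } | \xi |^2 \leq ( \ep + | \nabla u_\ep |^2 )^{ (p-2)/2 } | \xi |^2 \leq A_\ep \xi \cdot \xi \leq ( p - 1 ) ( \ep + | \nabla u_\ep |^2 )^{ (p-2)/2 } | \xi |^2 .
\]
A De Giorgi--Moser iteration — the function $ ( \ep + | \nabla u_\ep |^2 )^{1/2} $ is a subsolution of the above linearized equation, and on its high super-level sets that equation is effectively uniformly elliptic with $ \ep $-independent constants — together with the energy inequality $ \int_{ B ( \hat w, \hat r ) } | \nabla u_\ep |^p \, dx \leq c \hat r^{-p} \int_{ B ( \hat w, 2 \hat r ) } u_\ep^p \, dx $, yields $ \max_{ B ( \hat w, \hat r ) } | \nabla u_\ep | \leq c \hat r^{-1} \max_{ B ( \hat w, 2 \hat r ) } u_\ep $ with $ c = c ( p, n ) $ independent of $ \ep $. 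The De Giorgi oscillation-decay argument for $ \nabla u_\ep $ — distinguishing the nondegenerate case, where $ | \nabla u_\ep | $ is bounded below on a dyadic ball so $ A_\ep $ is genuinely uniformly elliptic and linear theory applies, from the degenerate case, where $ | \nabla u_\ep | $ and hence its oscillation is small — then gives $ \si = \si ( p, n ) \in ( 0, 1 ] $ with
\[
| \nabla u_\ep ( x ) - \nabla u_\ep ( y ) | \leq c \Big( \frac{ | x - y | }{ \hat r } \Big)^\si \max_{ B ( \hat w, \hat r ) } | \nabla u_\ep | , \qquad x, y \in B ( \hat w, \hat r / 2 ) ,
\]
again with $ \ep $-independent constants. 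Passing to a subsequence with $ \nabla u_\ep \to \nabla \ti u $ locally uniformly (this follows from the uniform bounds and $ W^{1,p} $ convergence), the last two displays carry over to $ \ti u $; chaining them gives the displayed chain of inequalities, and a covering argument upgrades this to $ \nabla \ti u \in C^{0,\si}_{\mathrm{loc}} ( \Om \cap B ( w, 4 r ) ) $.

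For the remaining assertions: if $ \nabla \ti u ( x ) \neq 0 $ then by the continuity just obtained $ | \nabla \ti u | \geq \de > 0 $ on a ball about $ x $, so there the coefficients $ b_{ik} $ of (\ref{1.4}) are uniformly elliptic by (\ref{1.5}) and lie in $ C^{0,\si} $ (being smooth functions of $ \nabla \ti u $); viewing $ \ze = \ti u_{x_j} $ as a solution of (\ref{1.3}), Schauder theory gives $ \ti u \in C^{2,\si} $, which upgrades $ b_{ik} $ to $ C^{1,\si} $, and iterating shows $ \ti u \in C^\infty $ near $ x $. For the second-derivative inequality I would work with the smooth $ u_\ep $: differentiate the regularized equation in $ x_k $, test with $ \eta^2 ( u_\ep )_{x_k} $ for a cutoff $ \eta \in C_0^\infty ( B ( \hat w, 2 \hat r ) ) $ with $ \eta \equiv 1 $ on $ B ( \hat w, \hat r ) $ and $ | \nabla \eta | \leq c / \hat r $, sum over $ k $, integrate by parts, and use the ellipticity bounds for $ A_\ep $ above with Young's inequality to absorb, obtaining
\[
\int_{ B ( \hat w, \hat r ) } ( \ep + | \nabla u_\ep |^2 )^{ (p-2)/2 } \sum_{ i, j = 1 }^n ( u_\ep )_{ x_i x_j }^2 \, dx \leq \frac{ c }{ \hat r^2 } \int_{ B ( \hat w, 2 \hat r ) } ( \ep + | \nabla u_\ep |^2 )^{ p/2 } \, dx ,
\]
with $ c = c ( p, n ) $ independent of $ \ep $. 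On the open set $ \{ | \nabla \ti u | > 0 \} $ the bootstrap above applies to the $ u_\ep $ uniformly, so $ u_\ep \to \ti u $ in $ C^2_{\mathrm{loc}} $ there; letting $ \ep \to 0 $ with Fatou on the left (restricted to $ \{ | \nabla \ti u | > 0 \} $) and $ ( \ep + | \nabla u_\ep |^2 )^{ p/2 } \to | \nabla \ti u |^p $ in $ L^1 $ on the right yields the stated inequality.

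The main obstacle is the uniform-in-$ \ep $ gradient Hölder estimate in the second paragraph: this is precisely where the degeneracy of the $ p $ Laplacian bites, and it requires the De Giorgi alternative (following Uhlenbeck, Evans, DiBenedetto, Tolksdorf and Lewis) rather than off-the-shelf linear theory. Once that estimate is in hand, the gradient bound, the smoothness bootstrap, the second-derivative Caccioppoli inequality, and all the limit passages are routine.
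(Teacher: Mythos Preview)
The paper does not prove this lemma; it simply refers the reader to \cite{BL05} for references, so there is no in-paper argument to compare against. Your vanishing-viscosity regularization followed by uniform De Giorgi--Moser and oscillation-decay estimates for $\nabla u_\ep$, Schauder bootstrap where $\nabla \ti u \neq 0$, and the differentiated Caccioppoli inequality is precisely the standard route developed in the references (Uhlenbeck, DiBenedetto, Tolksdorf, Lewis) that \cite{BL05} points to, and the sketch is correct under the section's standing assumption $p\geq 2$.
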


\begin{lemma}
\label{lemma2.5}
Let $ p, \Om, w,  r, \ti u,  $ be as in Lemma \ref{lemma2.1}. Suppose   for some  $ z \in \rn{n},  t \geq  100 r,    $   that  $ w \in
\ar B ( z, t ) $   and
\[
B ( w, 4r ) \sem \bar B ( z, t ) =  B ( w, 4 r ) \cap \Om. 
\]
  There exists $ \si = \si (p,n) \in (0,1) $ for which  $ \ti u |_{ \Om \cap B(w,3r)}    $ has  a  $ C^{ 1, \si } \cap W^{1,p} $  extension to   $ B ( w, 3r) $ (denoted $ \bar  u$).  If
  $ x \in  B (  w,  3r )\sem  \ar B ( z, t ) $  and   $ \nabla \bar  u ( x ) \not = 0, $
  then  $ \bar  u $ is infinitely differentiable in an open  neighborhood of  $ x. $
  Moreover,
  \[
  \int\limits_{ \Om \cap B (  w ,  r/2  ) \cap \{ | \nabla  \bar u | > 0 \}  } \, | \nabla  \bar  u |^{p-2}   \,
     \sum_{i,j = 1}^n \bar u_{x_i x_j}^2  dx   \leq \, \frac{c}{r^{2}}
       \int\limits_{ \Om \cap B (w ,  2r) } \, | \nabla  \bar  u |^{p} \, dx 
       \]
 and if    $ x, y \in  \Om  \cap B (  w,   r/2 ),  $  then
    \[
    \begin{array}{ll}  \frac{1}{c}\, | \nabla   \bar u ( x ) - \nabla  \bar  u ( y ) |  \,
       & \leq  \,  \left(\frac{| x  - y |}{ r}\right)^\si \, {\ds \max_{ \Om \cap B (  w ,  r  )} } \, | \nabla   \bar u |  \,
    \\  \\  &   \leq \, \frac{c}{r}\,   \left(\frac{| x  - y |}{ r}  \right)^\si  \, {\ds  \max_{\Om \cap B ( w, 2r )}} \bar u.
    \end{array}
    \]
\end{lemma}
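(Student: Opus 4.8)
The plan is to regard Lemma \ref{lemma2.5} as the ``near a large sphere'' companion of Lemma \ref{lemma2.4}, and to reduce its quantitative part to the interior second-derivative estimate of Lemma \ref{lemma2.4} by differentiating the $p$ Laplace equation along rotation fields centered at $z$ in place of along coordinate directions. First I would establish that $\ti u|_{\Om\cap B(w,3r)}$ has a $C^{1,\si}$ representative up to $\ar B(z,t)\cap B(w,3r)$; since $\ar B(z,t)$ is a sphere of radius $t\ge100r$, this is the standard boundary regularity for $p$ harmonic functions vanishing on a $C^\infty$ portion of the boundary (cf.\ the references in \cite{BL05}), and the bound is scale invariant precisely because $t\ge100r$ keeps the boundary geometry uniform on scale $r$. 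The two-sided H\"{o}lder estimate for $\nabla\bar u$ then follows by combining this with the interior estimate of Lemma \ref{lemma2.4} and a routine covering argument, the rightmost inequality using Lemma \ref{lemma2.1} and a boundary gradient (barrier) bound near $\ar B(z,t)$.

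To produce the extension across the sphere, note that on $\ar B(z,t)\cap B(w,3r)$ one has $\bar u=0$, so $\nabla\bar u$ is parallel to the normal $R(x)=(x-z)/|x-z|$ there, and define $\bar u$ on the inner side by odd reflection in the sphere, $\bar u(x):=-\bar u\bigl(z+t^2(x-z)/|x-z|^2\bigr)$ for $x\in B(z,t)$ near $\ar B(z,t)$. Inversion in $\ar B(z,t)$ fixes that sphere pointwise and has differential there equal to reflection in the tangent hyperplane; since $\nabla\bar u$ has no tangential component on $\ar B(z,t)$, the two pieces of $\bar u$ match to first order, whence $\bar u\in C^{1,\si}(B(w,3r))\cap W^{1,p}(B(w,3r))$. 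As inversion is real analytic and $\ti u$ is $C^\infty$ near any point with non-vanishing gradient by Lemma \ref{lemma2.4}, $\bar u$ is $C^\infty$ near every $x\in B(w,3r)\setminus\ar B(z,t)$ with $\nabla\bar u(x)\neq0$.

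There remains the weighted $W^{2,2}$ bound. Let $T_\ell$, $1\le\ell\le n(n-1)/2$, generate the rotations of $\rn{n}$ about the coordinate two-planes through $z$, and put $\ti T_\ell=T_\ell/t$. Because $t\ge100r$, on $B(w,4r)$ these satisfy $|\ti T_\ell|\approx1$ and $|\nabla\ti T_\ell|\le c/r$, and at each such point $\{\ti T_\ell(x)\}$ spans $R(x)^\perp$ with constants depending only on $n$. The $p$ Laplace equation and its nondegenerate regularization are rotation invariant about $z$, so $\ti T_\ell\bar u$ solves the linearized equation (\ref{1.3}) wherever $\bar u$ is $C^\infty$ with non-vanishing gradient, and $\ti T_\ell\bar u=0$ on $\ar B(z,t)\cap B(w,3r)$ since $\ti T_\ell$ is tangent to $\ar B(z,t)$ and $\bar u$ vanishes there. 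To handle the critical set $\{\nabla\bar u=0\}$ I would run the argument with $\bar u$ replaced by the solution $\bar u_\ep$ of the regularized equation $\nabla\cdot\bigl((|\nabla\bar u_\ep|^2+\ep)^{(p-2)/2}\nabla\bar u_\ep\bigr)=0$ in $\Om\cap B(w,3r)$ having the same boundary values (so $\bar u_\ep$ is $C^{2,\al}$ up to $\ar B(z,t)\cap B(w,3r)$ and $\bar u_\ep\to\bar u$ in $C^2_{\mathrm{loc}}$ on $\{|\nabla\bar u|>0\}$). For a cutoff $\phi\in C_0^\infty(B(w,r))$ with $\phi\equiv1$ on $B(w,r/2)$ and $|\nabla\phi|\le c/r$, testing the regularized linearized equation for $\ti T_\ell\bar u_\ep$ against $(\ti T_\ell\bar u_\ep)\phi^2$ --- admissible because $\ti T_\ell\bar u_\ep$ vanishes on $\ar B(z,t)$ --- and using ellipticity exactly as in the proof of Lemma \ref{lemma2.4} gives, uniformly in $\ep$,
\[
\int (|\nabla\bar u_\ep|^2+\ep)^{(p-2)/2}\,|\nabla(\ti T_\ell\bar u_\ep)|^2\,\phi^2\,dx\;\le\;\frac{c}{r^2}\int\limits_{\Om\cap B(w,r)}(|\nabla\bar u_\ep|^2+\ep)^{p/2}\,dx .
\]
Summing over $\ell$ controls every second derivative of $\bar u_\ep$ except the purely radial one $\sum_{i,j}R_iR_j\,\bar u_{\ep,x_ix_j}$; this last is recovered from the equation itself, whose coefficient of it is $\ge\min\{p-1,1\}(|\nabla\bar u_\ep|^2+\ep)^{(p-2)/2}$ by the regularized form of (\ref{1.5}), all other second-order terms there carrying at least one tangential derivative and hence being already controlled. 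Letting $\ep\to0$, by Fatou on the left and Lemma \ref{lemma2.1} together with $W^{1,p}$ convergence on the right, yields the asserted estimate with $\int_{\Om\cap B(w,2r)}|\nabla\bar u|^p\,dx$ on the right.

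The main obstacle is the degeneracy of the operator in (\ref{1.3}) on the critical set $\{\nabla\bar u=0\}$, where $\bar u$ is in general no smoother than $C^{1,\si}$: this is exactly the difficulty already present in Lemma \ref{lemma2.4}, dealt with here by the $\ep$-regularization and a careful passage to the limit. The rest is routine bookkeeping --- confirming that the quoted boundary $C^{1,\si}$ regularity holds with constants depending only on $p,n$ (this is where $t\ge100r$ is used), checking the first-order matching of the odd reflection, and keeping track of the lower-order terms of size $\le c/r$ produced when second derivatives are written relative to the directions $R$ and $\ti T_\ell$.
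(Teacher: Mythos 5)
Your proposal is correct but takes a genuinely different route from the paper. The paper normalizes $z=0$, $t=1$ and defines the same odd Kelvin-type reflection $\bar u(x)=-\ti u(x/|x|^2)$ on the inside of the sphere, but then makes the key observation --- via a chain-rule computation and a symmetric/antisymmetric splitting of test functions --- that $\bar u$ is a \emph{weak solution in all of $B(w,3r)$} to the weighted equation $\nabla\cdot\bigl(\ga\,|\nabla\bar u|^{p-2}\nabla\bar u\bigr)=0$, with $\ga(x)=|x|^{2p-2n}$ for $|x|\le1$ and $\ga\equiv1$ outside. Since $\ga$ is Lipschitz near the unit sphere (and bounded above and below on $B(w,3r)$ because $t\ge100r$), the boundary problem becomes an \emph{interior} problem for a uniformly Lipschitz-weighted $p$-Laplacian, and the whole conclusion --- the $C^{1,\si}$ extension, the weighted $W^{2,2}$ estimate, and the two-sided H\"older bound --- drops out from Tolksdorf's regularity theory \cite{T84} exactly as in Lemma~\ref{lemma2.4}. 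Your route instead proves boundary regularity head-on: standard $C^{1,\si}$-up-to-the-boundary theory on the $\Om$ side, first-order matching of the reflection, and a Caccioppoli estimate using rotation (Killing) fields tangent to $\ar B(z,t)$ together with an $\ep$-regularization to handle the degenerate set and the recovery of the pure radial second derivative from the equation. Both arguments are sound; yours is more self-contained (it does not need to notice that the reflected function satisfies a weighted equation) but costs you a separate tangential-differentiation Caccioppoli argument and careful commutator bookkeeping, whereas the paper's inversion trick reduces everything to a single citation of interior regularity with only the Lipschitzness of $\ga$ to check. One place where you should be a bit more explicit is the $C^{0,\si}$ continuity of $\nabla\bar u$ \emph{across} the sphere: first-order matching only gives $C^1$, and you still need to splice the one-sided H\"older estimates (which is routine via the matching and the boundary $C^{1,\si}$ bound, but deserves a sentence); in the paper this issue simply does not arise because the reflected function satisfies a single equation across the interface.
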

\begin{proof}   
We assume as we may that $ z = 0 $ and $ t = 1 $ since otherwise we  consider  $  u^*  ( x ) = \ti u ( z  + t x ) $ and use
translation - dilation invariance of  the $p$  Laplacian. Let
\begin{displaymath}
    \bar u(x)= \left\{
     \begin{array}{cl}
       \, \, \, \ti u(x) & \mbox{when}\, \, x  \in \bar \Om \cap  B ( w, 3 r )\\
       -\ti u(\frac{x}{|x|^2} ) & \mbox{when}\, \, x  \in  B(0,1) \cap  B ( w,  3 r ).
     \end{array}
   \right.
\end{displaymath}
If  $   y  =
 x/ | x |^2    \in B(0,1) \cap B(w,3r) $ and $ \nabla  \ti u (x) \not = 0, $   one can use the chain rule to calculate  at $ y $ that 
\begin{equation}  
\label{2.1}
\nabla \cdot \left( |y|^{2p - 2n}  |  \nabla \bar u |^{p-2}   \nabla \bar u  \right)   =
 \sum_{i=1}^n  \frac{\ar}{\ar y_i}  \left(  |y|^{2p - 2n } |\nabla \bar u |^{p-2}  \frac{ \ar \bar u }{\ar y_i}  \right) = 0. 
 \end{equation}
Put
\begin{displaymath}
    \ga (x)= \left\{
     \begin{array}{cl}
       |x|^{2p-2n} & \mbox{when}\, \, |x| \leq 1\\
       1 & \mbox{when}\, \, | x | > 1.
     \end{array}
   \right.
\end{displaymath}

We assert that
$ \bar u $ is a weak solution in  $ B ( w, 3r) $  to
\begin{equation}
\label{2.2}
\nabla \cdot \left( \ga   | \nabla \bar u |^{p-2} \nabla \bar u   \right) = 0. 
\end{equation}
Indeed from the assumptions on  $ \ti u  $ we see that $ \bar u  \in  W^{1,p}  ( B ( w, 3 r ) ). $   Let  $  \ph  \in  C_0^\infty ( B ( w, 3r ) ) $ and put
\[   
\ph_1 (x)   = \frac{1}{2}  ( \ph ( x  )  -  \ph (\frac{x}{|x|^2} )   )
\]
while
\[
\ph_2 (x)   = \frac{1}{2} (  \ph ( x  )  +  \ph ( \frac{x}{|x|^2} ) ).
\]
Using the change of  variables theorem  and the knowledge garnered from  (\ref{2.1}) we see that
\[
\int\limits_{ B ( w, 4 r )}  \ga  | \nabla  \bar u |^{p-2}  \nabla \bar u \cdot  \nabla  \ph_2  \,  dx  = 0  
\]
and
\[
\int\limits_{ B ( w, 4 r )}  \ga  | \nabla  \bar u |^{p-2}  \nabla \bar u \cdot  \nabla  \ph_1   dx  = 2 \int\limits_{ \Om \cap B ( w, 4 r )}    | \nabla  \ti  u |^{p-2}  \nabla \ti  u \cdot  \nabla  \ph_1  \, dx  = 0  
\]
Since  $ \ph = \ph_1 + \ph_2, $  we conclude from the  above displays that $ \bar u $ is a weak solution  to  (\ref{2.2}) in $ B ( w, 3 r ). $

 From our  assertion   we  see that  $ \bar u $ satisfies the hypotheses in \cite{T84},  except  for  $ \ga $ being continuously differentiable.  However the argument in
 \cite{T84}  and  all constants use only  Lipschitzness of  $ \ga, $  so is also valid in our situation.    Applying   the results in \cite{T84} (similar to Lemma \ref{lemma2.4}) and using  the definition
 of $  \bar u, $    we  obtain  the  first and second  displays in Lemma  \ref{lemma2.5}.
 \qed
 \end{proof}
 \begin{lemma}
 \label{lemma2.6}
Let $ p, \Om, w,  r, \ti u,  $ be as in Lemma \ref{lemma2.1} and $ - \infty < \eta \leq - 1. $ Let  $ L, (b_{ik})  $  be
as in  (\ref{1.3}), (\ref{1.4}), when  $ x  \in  \Om \cap B ( w, 4 r ) $ and $ \nabla \ti u ( x ) \not = 0. $ Let
$ b_{ij}  =  \de_{ij} $ when $ \nabla \ti u ( x ) = 0 $
 and put   $ v =   \max \{ \log |\nabla  \ti u |, \eta \}. $
Then $ v $ is locally  a weak sub solution to  $ L $ in  $ \Om \cap B ( w, 4 r ). $
\end{lemma}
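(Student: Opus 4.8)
The plan is to use that being a weak subsolution of $L$ is a \emph{local} property and to verify the subsolution inequality near each point of $\Omega \cap B(w,4r)$, distinguishing the points where $\nabla \tilde u$ vanishes from those where it does not. Note first that since $2 \le n \le p$ we have $p \ge 2$, so $|b_{ik}(x)| \le c\,|\nabla \tilde u(x)|^{p-2}$ is locally bounded on $\Omega \cap B(w,4r)$, and $e^{\eta} > 0$ since $\eta$ is finite. Fix $x_0 \in \Omega \cap B(w,4r)$. If $\nabla \tilde u(x_0) = 0$, then since $\nabla \tilde u$ is continuous (Lemma~\ref{lemma2.4}) there is a ball $B(x_0,s) \subset \Omega \cap B(w,4r)$ on which $|\nabla \tilde u| < e^{\eta}$, hence $v \equiv \eta$ there, so $\nabla v \equiv 0$ a.e.\ and $\int \sum_{i,k} b_{ik}\, v_{x_k}\, \varphi_{x_i}\, dx = 0$ for every nonnegative $\varphi \in C_0^{\infty}(B(x_0,s))$; thus $v$ is a weak subsolution of $L$ in $B(x_0,s)$ (and the value $\delta_{ik}$ assigned to $b_{ik}$ on $\{\nabla \tilde u = 0\}$ plays no role, as $v_{x_k} = 0$ a.e.\ there).

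If instead $\nabla \tilde u(x_0) \ne 0$, choose $s > 0$ with $\overline{B(x_0,s)} \subset \Omega \cap B(w,4r)$ and $|\nabla \tilde u| \ge \tfrac12 |\nabla \tilde u(x_0)| > 0$ on $B(x_0,s)$. By Lemma~\ref{lemma2.4}, $\tilde u \in C^{\infty}(B(x_0,s))$, so the coefficients $b_{ik}$ of (\ref{1.4}) are smooth on $B(x_0,s)$, $L$ is uniformly elliptic there by (\ref{1.5}) (recall $p - 1 \ge 1$ and $|\nabla \tilde u|^{p-2}$ is bounded between positive constants on $B(x_0,s)$), and $\log|\nabla \tilde u|$ is smooth on $B(x_0,s)$ with $L \log|\nabla \tilde u| \ge 0$ pointwise by (\ref{1.6}) (valid because $p \ge n$). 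Thus on $B(x_0,s)$ both $\log|\nabla \tilde u|$ and the constant $\eta$ are classical, hence weak, subsolutions of $L$, and I would invoke the standard fact that the pointwise maximum of two weak subsolutions of a linear divergence-form elliptic operator is again a weak subsolution (see e.g.~\cite{HKM06}; the proof is the usual test-function computation with $\varphi\, \theta_{\epsilon}(\log|\nabla \tilde u| - \eta)$, $\theta_{\epsilon}$ a smooth approximation of $t \mapsto \mathbf{1}_{\{t > 0\}}$, the cross term being controlled by nonnegativity of the quadratic form in (\ref{1.5})) to conclude that $v = \max\{\log|\nabla \tilde u|, \eta\}$ is a weak subsolution of $L$ in $B(x_0,s)$. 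Since $x_0 \in \Omega \cap B(w,4r)$ was arbitrary, $v$ is locally a weak subsolution of $L$ in $\Omega \cap B(w,4r)$.

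The only ingredient that is not purely formal is this maximum-of-subsolutions principle, and it is precisely what effects the gluing across the level set $\{|\nabla \tilde u| = e^{\eta}\}$, on which neither description of $v$ --- that it equals the smooth, $L$-subharmonic function $\log|\nabla \tilde u|$, or that it equals the constant $\eta$ --- holds by itself; a direct integration by parts would be awkward because near critical points of $\tilde u$ the function $\log|\nabla \tilde u|$ need not be locally integrable, which is why the argument must be run locally, where $\log|\nabla \tilde u|$ is either irrelevant (first case) or smooth (second case). Everything else --- local boundedness of the $b_{ik}$, local Lipschitz continuity of $v$ (so the weak formulation of $Lv \ge 0$ is meaningful), and the sign $L \log|\nabla \tilde u| \ge 0$ --- is immediate from $p \ge n \ge 2$, Lemma~\ref{lemma2.4}, and (\ref{1.4})--(\ref{1.6}).
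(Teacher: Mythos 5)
Your proof is correct, but it follows a genuinely different route from the paper's. The paper argues globally: starting from $v\in W^{1,2}_{\mathrm{loc}}$, it tests the pointwise inequality $Lv\geq 0$ on $\{v>\eta\}$ against $\theta g$, where $g=(\max\{v-\eta-\epsilon,0\}+\sigma)^\delta-\sigma^\delta$ is a nonnegative Lipschitz cutoff supported in $\{v>\eta+\epsilon\}$ and a nondecreasing function of $v$; it then drops the good-sign term $\int\theta\sum b_{ik}g_{x_i}v_{x_k}\,dx\geq 0$ (which follows from (\ref{1.5}) since $g$ is increasing in $v$) and passes to the limit in $\epsilon$, $\sigma$, $\delta$. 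You instead localize at each point, treating the critical and noncritical sets of $\nabla\tilde u$ separately, and in the noncritical case invoke the standard lemma that the maximum of two weak subsolutions of a symmetric, uniformly elliptic divergence-form operator is again a weak subsolution. The two arguments rest on the same algebraic fact — the positive semidefiniteness in (\ref{1.5}) is what kills the cross term in your $\theta_\epsilon$ computation, just as it kills $\int\theta\, b\nabla v\cdot\nabla g$ in the paper — so they are, at bottom, reparametrizations of the same estimate. Your version has the pedagogical advantage of making explicit that the truncation at the level $\eta$ is exactly what screens off the critical set $\{\nabla\tilde u=0\}$ (where neither $\log|\nabla\tilde u|$ nor the PDE coefficients are well behaved), and of reducing the gluing across $\{v=\eta\}$ to a named, citable lemma; the paper's version is more self-contained since it never needs to appeal to the max-of-subsolutions principle. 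Both are valid proofs.
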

\begin{proof} 
From Lemma \ref{lemma2.4}
we see that  $ v  $ is locally in  $ W^{1,2} ( \Om \cap B ( w, 4r ) ). $  Given $ \ep, \de, \si > 0 $ small
define  $ g $  by
\[
g (x)  =   ( \max \{  v - \eta - \ep, 0  \} + \si  )^\de  - \si^\de, x \in  \Om \cap B ( w, 4 r). 
\]
   As  mentioned earlier  in  Lemma 5.1 of \cite{LNP11} we showed that
$ L v  \geq 0 $ at $ x \in  \Om \cap B ( w, 4 r ) $  when $ v ( x ) \not = \eta. $ For the reader's convenience we repeat  
this calculation after the proof of Lemma \ref{lemma2.6}.

  From this fact
we deduce that if   $0 \leq  \he \in C_0^\infty ( \Om \cap B ( w, 4r ) ), $ then
\[
\begin{array}{ll}0 \leq  { \ds  \int\limits_{ \Om \cap B ( w, 4r ) } \he  g  L v dx } =
	& -  {\ds  \sum_{i,k=1}^n  \, \, \int\limits_{ \Om \cap B ( w, 4r ) } b_{ik}  ( \he g )_{x_i}  v_{x_k} dx}
\\ \\  & \leq \, - { \ds \sum_{i,k=1}^n\, \,  \int\limits_{ \Om \cap B ( w, 4r ) } } g   b_{ik}   \he_{x_i}  v_{x_k} dx,
\end{array} 
\]
where in the last inequality we have used  (\ref{1.5}). Using the above inequality,  the bounded convergence theorem,
and letting first $ \ep, $   second $ \si, $ and third $ \de  \rar 0, $ we get Lemma \ref{lemma2.6}.   
\qed
\end{proof}

To  show  $ L v (x)  \geq 0  $ when $ v ( x ) \not = \eta, $ put $ \tau ( x ) = 2 v ( x ) =  \log | \nabla \ti u |^2. $ We calculate at $x,$
\[
     \tau_{x_j} =  \sum_{k=1}^n  \, \frac{ 2 \ti u_{x_k}\ti u_{x_kx_j}}{|\nabla \ti u|^2}
       .  \]
Furthermore,
    \begin{eqnarray*}
    L \tau &=&  \sum_{i, j, k = 1}^n \, \left(b_{ij}  \dfrac{ 2
    \ti u_{x_k}\ti u_{x_kx_j}}{|\nabla \ti u|^2}\right)_{x_i}\notag\\
    &=&
     \sum_{i, j, k = 1}^n  \dfrac{2\ti u_{x_k}}{|\nabla  \ti u|^2} \left(b_{ij}
    \ti u_{x_kx_j}\right)_{x_i}   +
    \sum_{i,j,k=1}^n   2 b_{ij} \, \ti u_{x_kx_j}  \left(\dfrac{\ti u_{ x_k}}{|\nabla \ti u|^2}
    \right)_{x_i}.
    \end{eqnarray*}
    The first term on the right is zero since $L\ti u_{x_k} =0$ (see (\ref{1.3})).  We differentiate the second
    term to get
     \begin{equation}
     \label{2.3}
    L\tau  =
    \sum_{i, j, k = 1}^n    \left[ \, 2 \, |\nabla \ti u|^{-2}   b_{ij} \ti u_{x_kx_j} \ti u_{x_kx_i}
    - \sum_{i,j,k,l = 1}^n  \, 4 |\nabla \ti u|^{-4}  \, \ti u_{x_k}\ti u_{x_kx_j}b_{ij}\ti u_{x_l}\ti u_{x_lx_i}
    \right].
     \end{equation}     
We assume  as we may that  $ \ti u_{x_j} = 0 $ for $ j \not = 1, $ since otherwise we rotate our coordinate system and use invariance of the  $p$ Laplace equation under rotations.  Under this assumption
    we have
    \begin{align*}
    b_{11} & =  ( p - 1 )  \, |\nabla \ti u|^{p-2},  \\
    b_{ii} & = |\nabla \ti u|^{p-2} \quad i\neq 1, \\
    b_{ij} & =0 \quad i\neq j.
    \end{align*}
    Using these  equalities in (\ref{2.3})  we obtain, at $x,$
    \[
    L \tau  = 2 |\nabla \ti u|^{p-4} \left( (p-1) \sum_{k=1}^n \ti u_{x_kx_1}^2 +
    \sum_{i = 2,  k = 1}^n  \ti u_{x_kx_i}^2 -2(p-1)
    \ti u_{x_1x_1}^2-\sum_{i= 2}^n 2 u_{x_1x_i}^2 \right).
   \]
    Collecting the $x_1x_1$ and $x_1x_i$ ($i\neq 1$) derivatives
    yields  
    \begin{equation} 
    \label{2.4}
    L \tau  = 2 |\nabla \ti u|^{p-4} \left(-(p-1)\ti u_{x_1x_1}^2 +(p- 2)
    \sum_{k =  2}^n \ti u_{x_kx_1}^2   + \sum_{k, i =2}^n \ti u_{x_kx_i}^2 \right).
     \end{equation}
    The last sum contains the pure second derivatives of $\ti u$ in the
    $ x_k $ direction when $ k \not = 1. $  These derivatives may be estimated
    using the $p$-Laplace equation for $u$ at the point $x$, i.e., at $x$ we have
    \[
    (p-1)\ti u_{x_1x_1} + \sum_{k= 2}^n \ti u_{x_kx_k} =0.
    \]
    Solving for $\ti u_{x_1x_1}$, taking squares and using H\"older's inequality we see that 
    \[
    \sum_{k= 2}^n \ti u_{x_kx_k}^2 \geq \dfrac{(p-1)^2}{n-1}\ti u_{x_1x_1}^2.
    \]
    Substituting this expression into (\ref{2.4})   gives
    \[
    L \tau  \geq 2 |\nabla \ti u|^{p-4} \left(
    ( {\ts \frac{(p-1)^2}{n-1}} - (p-1) )\ti u_{x_1x_1}^2 +(p-2) \sum_{k=
    2 } \ti u_{x_kx_1}^2   + \sum_{k, i  = 2,   k\neq i}^n \ti u_{x_kx_i}^2 \right).
    \]
    Thus,  $L \tau  \geq 0$ when $\frac{(p-1)^2}{n-1}-(p-1) =
    \frac{(p-1)(p-n)}{n-1} \geq 0$. In particular, $L \tau  \geq 0$ if $p\geq n$.   Note that when $p=n$
then $ \ti u ( x ) = \log |x| $ is $n$ harmonic and  $ L ( \log | \nabla \ti u |   )  \equiv 0 $ when $ x \not = 0 $.

\section{Proof of  Theorem \ref{theorem1.1}.}
\label{section3}
Let $ p, n, O,  \hat u, \hat \mu,  \rho,  \hat z, $  be as in  Theorem \ref{theorem1.1} and suppose that $ \la $ is a positive nondecreasing
function on  $(0,1]$  with  $  \lim_{ t \rar 0}  t^{1-n} \la ( t )  = 0. $
  Theorem \ref{theorem1.1}  follows easily from the next proposition(See section \ref{subsection3.2}).
\subsection{Proof of Proposition  \ref{proposition3.1}}
\begin{proposition}
\label{proposition3.1}
There  exists  $ c = c (p, n ) $  and a set  $ Q \subset \ar O \cap  B ( \hat z, \rho ) $   with the following properties.
 $  \hat \mu ( \ar O \cap B ( \hat z, \rho ) \sem Q ) = 0$
 and for every  $ w  \in Q  $  there are arbitrarily small $ r = r (w), 0  < r \leq  10^{-10},  $  such that
  \[
  \begin{array}{lc}
   (a) & \bar B ( w, 100 r ) \subset B ( \hat z, \rho ) \, \, \mbox{ and }\, \, \hat  \mu ( B ( w, 100 r ) ) \leq  c\,   \hat \mu ( B ( w, r ) ).
\end{array}
\]
 Moreover there is a  compact  set  $  F = F ( w, r ) \subset   \ar O \cap B ( w, 20 r ) $ with
  \[
  \begin{array}{lc}
   (b) &  H^\la ( F ) = 0 \, \, \mbox{  and }\, \,  \hat  \mu ( F )  \geq \frac{1}{c}\, \hat \mu  ( B ( w, 100 r )).\, \, \, \, \, \, \, \, \, \, \, \, \, \, \, \, \, \, \, \, \, \, \, \, \, \, \, \, \, \, \, \, \, \, \, \, \, \, \, \,
  \end{array}
  \]
  \end{proposition}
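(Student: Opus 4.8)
\medskip
\noindent\emph{Proof plan.}\ \ I would split the statement into the soft part $(a)$, which uses nothing about $\hat u$, and the part $(b)$, which I reduce to a single density statement that is the real content and the only place the hypothesis $p\ge n$ is used.

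\smallskip
\noindent\emph{Part $(a)$.}\ \ This is a covering lemma valid for any locally finite Borel measure: for $\hat\mu$-a.e.\ $w$ there are arbitrarily small $r$ with $\hat\mu(B(w,100r))\le c(n)\hat\mu(B(w,r))$; then $\bar B(w,100r)\subset B(\hat z,\rho)$ holds automatically for small $r$ when $w\in B(\hat z,\rho)$, and $Q$ may be taken inside the open ball since $\hat\mu(\ar O\cap B(\hat z,\rho)\sem Q)$ only sees that ball. To prove the lemma, suppose the set $E$ of $w$ at which the inequality fails at \emph{every} small scale had positive measure. Pass to $E_j\subset E$ of positive measure on which it fails for all $r<1/j$, iterate the reverse inequality $k$ times to get $\hat\mu(B(w,100^{-k}s))<c^{-k}\hat\mu(B(w,s))$ for $w\in E_j,\ s<1/j$, cover $E_j$ by the equal balls $\{B(w,100^{-k}/2j)\}_{w\in E_j}$, extract a subfamily of overlap $\le N_n$ (Besicovitch), and count volumes (each dilate $B(w_i,1/2j)$ meets a given point at most $N_n2^n100^{kn}$ times). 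This yields $\hat\mu(E_j)\le N_n2^n(100^n/c)^k\hat\mu(\rn n)\to0$ as $k\to\infty$ once $c=c(n)>100^n$, a contradiction. (Here $\hat\mu$ is finite, having compact support.)

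\smallskip
\noindent\emph{Part $(b)$: reduction.}\ \ Fix $w\in Q$ and $r=r(w)$ as in $(a)$, and set $M(\zeta,s)=\max_{B(\zeta,s)}\hat u$. By Lemma \ref{lemma2.3}, for $\zeta\in\ar O\cap B(w,20r)$ and all small $s$,
\[
\hat\mu(B(\zeta,s))\ \approx\ s^{\,n-p}M(\zeta,s)^{\,p-1},\qquad\text{equivalently}\qquad
\frac{\hat\mu(B(\zeta,s))}{s^{\,n-1}}\ \approx\ \Big(\frac{M(\zeta,s)}{s}\Big)^{\,p-1}.
\]
Since $\la$ is nondecreasing with $s^{1-n}\la(s)\to0$, the Vitali (density) estimate gives $H^\la(\{\zeta:\limsup_{s\to0}\hat\mu(B(\zeta,s))/\la(s)\ge t\})\le ct^{-1}\hat\mu(\rn n)$ for all $t$, so $F_\infty:=\{\zeta\in\ar O\cap B(\hat z,\rho):\limsup_{s\to0}\hat\mu(B(\zeta,s))/\la(s)=\infty\}$ has $H^\la(F_\infty)=0$. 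If $\zeta\notin F_\infty$ then, because $s^{n-1}/\la(s)\to\infty$, one must have $\limsup_{s\to0}\hat\mu(B(\zeta,s))/s^{n-1}=0$, i.e.\ $\limsup_{s\to0}M(\zeta,s)/s=0$. Hence $(b)$ follows from the \emph{density claim}: $\hat\mu$ assigns no mass to the set where $\hat u$ vanishes faster than linearly, i.e.\ $\hat\mu(\{\zeta\in\ar O:\limsup_{s\to0}M(\zeta,s)/s=0\})=0$. Granting this, $\hat\mu$-a.e.\ $\zeta$ lies in $F_\infty$; taking a compact $F\subset F_\infty\cap B(w,20r)$ with $\hat\mu(F)\ge\frac12\hat\mu(B(w,20r))$ and using the doubling from $(a)$ (so $\hat\mu(B(w,100r))\le c\,\hat\mu(B(w,r))\le c\,\hat\mu(B(w,20r))$) gives $H^\la(F)=0$ and $\hat\mu(F)\ge\frac1{2c}\hat\mu(B(w,100r))$, which is $(b)$.

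\smallskip
\noindent\emph{Part $(b)$: the crux.}\ \ The density claim is the substantive point and the only place $p\ge n$ enters; it is the $p$-harmonic analogue, for $p\ge n$, of Wolff's theorem \cite{W93} on planar harmonic measure. The mechanism is inequality (\ref{1.6}): by Lemma \ref{lemma2.6} the truncations $v_\eta=\max\{\log|\nabla\hat u|,\eta\}$ ($\eta\le-1$) are weak subsolutions of the operator $L$ of (\ref{1.3})--(\ref{1.4}) on $O\cap B(w,4r)$. I would run a scale-by-scale (stopping-time) argument: using the flux representation $\hat\mu(\ph)=\lim_{\ep\to0^+}\ep^{-1}\int_{\{0<\hat u<\ep\}}|\nabla\hat u|^{p}\ph\,dx$ together with Lemma \ref{lemma2.3}, one must rule out that $\hat u$ vanishes faster than linearly on a set of positive $\hat\mu$ measure; near such an ``anomalously flat'' boundary point the interior estimates of Section \ref{section2} make $|\nabla\hat u|$ small, hence $-v_\eta=-\log|\nabla\hat u|$ large, and the subsolution property of $v_\eta$ --- through its Caccioppoli/mean-value inequalities and the weighted second-derivative bound of Lemma \ref{lemma2.4} --- should propagate this smallness across annuli and through successive scales, forcing the flux issuing from such points to be null. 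Making this propagation quantitative --- controlling the level sets $\{\hat u=t\}$ as $t\downarrow0$ by $v_\eta$ and Lemma \ref{lemma2.4}, and assembling the local estimates into the global density claim --- is the main obstacle; the reduction above and the covering-lemma bookkeeping are routine.
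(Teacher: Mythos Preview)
Part (a) is fine and matches the paper (which iterates the failed doubling to force $\hat\mu(B(x,t))/t^{n+1}\to 0$ and then uses $H^{n+1}(\rn n)=0$). Your reduction in (b) is also correct, and in fact the paper runs the \emph{converse} implication in Section~\ref{subsection3.2}: it deduces the density claim (equation (\ref{3.40})) from Proposition~\ref{proposition3.1} applied with a tailored $\la$. So you have correctly located the entire content of (b) in the density claim; what is missing is a proof of the latter.

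The gap is that your sketch does not contain the mechanism that actually proves it. Two concrete obstacles: first, the subsolution property of $v_\eta=\max\{\log|\nabla\hat u|,\eta\}$ gives, via the maximum principle, \emph{upper} bounds on $|\nabla\hat u|$ in terms of boundary values---it does not let you ``propagate smallness outward'' to conclude that flux vanishes; second, on the rough set $\ar O$ there is no pointwise identity $d\hat\mu=|\nabla\hat u|^{p-1}dH^{n-1}$, so a gradient bound is not directly a statement about $\hat\mu$. The paper manufactures a setting in which both obstacles disappear. After rescaling, it runs a Wolff stopping time covering $\ar\Om\cap\bar B(0,15)$ by balls $B(z_j,t_j)$ (stopped when $\mu(B(z_j,t_j))=Mt_j^{n-1}$ or $t_j=s$), \emph{removes} those balls together with a fixed good ball $\bar B(\ti z,2r_1)$ to form a domain $D$ with piecewise-spherical boundary, and solves the $p$-Laplacian there for an auxiliary $0\le u'\le u$. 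Now $\ar D$ is smooth $H^{n-1}$-a.e., so $d\mu'=|\nabla u'|^{p-1}dH^{n-1}$; a barrier plus the maximum principle for the subsolution $\log|\nabla u'|$ give the global bound $|\nabla u'|\le cM^{1/(p-1)}$ (Lemma~\ref{lemma3.2}); and integrating the identity $v'Lu'=0$ by parts over $D$, discarding a term of favorable sign coming from $Lv'\ge 0$, yields the decisive $L\log L$ estimate (Lemma~\ref{lemma3.4})
\[
\int_{\ar D}|\nabla u'|^{p-1}\,\bigl|\log|\nabla u'|\bigr|\,dH^{n-1}\ \le\ c\log M.
\]
This bounds by $c/A$ the $\mu'$-mass carried by stopped balls on whose boundary $|\nabla u'|^{p-1}\le M^{-A}$, so after fixing $A$ large almost all the mass sits on balls that one can cover cheaply in $\la$. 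The auxiliary domain/function $u'$ and the integration-by-parts identity behind Lemma~\ref{lemma3.4} are the two ingredients absent from your sketch; without them I do not see how to turn the subsolution inequality into control of $\hat\mu$.
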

 \begin{proof}   
To prove  $(a) $ of  Proposition \ref{proposition3.1}  we note that  $ \hat \mu (B ( x, t ))  \not = 0 $ whenever
 $ x \in \ar O$ and $\ar O \cap B ( x, t )   \subset \ar O  \cap  B ( \hat z, \rho ) $ and $ t > 0 $ as follows from
Lemma \ref{lemma2.3}.    Let 

\[   
\He =
\left 
\{ x \in \ar O \cap B ( \hat z, \rho )   :  \liminf\limits_{ t \rar 0 }\, \,   \frac{ \hat \mu ( B ( x,100 t
) ) }{\hat \mu ( B ( x, t ) )}  \geq  c  
\right\} 
\]

If $ x \in  \He,  $  then  there exists $
t_0 ( x ) > 0 $  for which

\[
 \hat \mu ( B ( x, 100 t ) ) \geq \frac{c}{2}  \hat \mu ( B ( x,
  t ) ) \mbox{ for }  0 < t <  t_0 ( x ). 
\]
Iterating  this  inequality  it follows
that  if  $ c $ is large enough then

\[
  \lim_{t \rar 0 } \frac{\hat  \mu ( B ( x, t )
)  }{t^{n+1} } = 0  \mbox{  whenever  $ x \in  \He .$}  
\]

Since    $  H^{n+1}  ( \rn{n} ) = 0, $  we  conclude that    $ \hat \mu ( \He ) =   0. $
   Thus we assume  $(a)$   holds  for some  $ c' =  c' (n),  w  \in \ar O \cap B ( \hat z, \rho ), $  and  $ r > 0. $

    To prove $(b)$  of  Proposition \ref{proposition3.1}
  let
\[
  \ga^{-1} =  {\ds \max_{B(w, 10r )}  \, \hat u}
\]
and put
\[
 u ( x ) =  \ga  \hat u ( w +  r x )\, \, \mbox{when}\, \,  w +  r  x \in   B ( \hat z, \rho ).
\]
Let
\[
\Om =  \{ x : w +   r  x  \in  O \cap B ( \hat z , \rho) \}.
\]
Using translation and dilation invariance of  the $ p$  Laplacian  we find  that  $ u $ is  $  p $  harmonic in   $ \Om  $  and if  $ \ze  = r^{-1} ( \hat z - w ), $  then  $ u $ is continuous in  $ B ( \ze, \rho/r ) $ with $ u \equiv 0 $ on  $ B ( \ze, \rho/r) \sem \Om. $
                              Moreover there exists a   measure $ \mu $  on $ \rn{n} $ with support in  $ \ar \Om
                              \cap \bar B ( \ze, \rho/r ) $ corresponding to $ u. $  In fact if
     $ E $ is a  Borel set  and $ T (  E ) =  \{ w +  r x : x \in E \} $  then
     $ \mu ( E ) $  =  $  r^{p-n}  \ga^{p-1}  \hat \mu ( T (E)). $    From   Lemma \ref{lemma2.3}  and Proposition \ref{proposition3.1} $(a),$  we  obtain  for some $  c =   c (p, n ) \geq 1 $   and
      $  2 \leq  t  \leq  50  $    that
\begin{equation}
\label{3.1}  
\frac{1}{c}  \leq  \mu ( B (0, 1) ) \leq \max_{B(0, 2 )} u  \leq  \max_{B (0, t )} u  \leq   c\,  \mu ( B (0, 100)) \leq  c^2  . 
\end{equation} 
From  (\ref{3.1})  and the definition of $ u $  we observe that to prove Proposition \ref{proposition3.1} $(b)$  it suffices to show that
     there exists a compact set  $ F'  \subset  B ( 0, 20) $  and  $ \hat c =  \hat c (p, n ) \geq 1 $ with
     \begin{equation}  
     \label{3.2}   
     \mu ( F' ) \geq   \frac{1}{\hat c}\, \,   \mbox{ and }\, \,    H^\la ( F' ) = 0.  
     \end{equation}
     To prove (\ref{3.2}) we first show for given
     $ \ep, \tau > 0 $ that there exists a  Borel set  $ E \subset  B (0, 20) $ and $ c = c (p,n) \geq 1 $ with
     \begin{equation}
     \label{3.3}  
     \phi^{\la}_\tau ( E ) \leq \ep\, \, \mbox{ and } \, \, \mu ( E ) \geq  \frac{1}{c}. 
     \end{equation}
     (\ref{3.2}) follows easily from  (\ref{3.3}).  Indeed,  choose  $  E_m  $ relative to  $ \tau = \ep = 2^{-m}, m = 1, 2,  \dots $  and put
     \[   
     E   =  \bigcap_k   \left(  \bigcup_{ m = k } E_m  \right). 
     \]
     Then from measure theoretic arguments it follows that
      (\ref{3.2}) is valid with $  F' $   replaced by  $ E $ and $ \hat c $ by $ c''. $    Using regularity of  $ \mu  $ we then  get  (\ref{3.2})  for  a  compact set
       $ F'  \subset  E. $   Thus to complete the proof of  Proposition \ref{proposition3.1}  we need only prove  (\ref{3.3}).

         To prove (\ref{3.3})  we note from  the definition of $u$   that
      $ u ( \ti z ) = 1 $ for some  $ \ti z  \in \ar B ( 0, 10  ). $  This note, (\ref{3.1}),  and Lemma \ref{lemma2.2}  imply  for some $ c_- = c_- ( p, n ) \geq 1 $ that
      \begin{equation}
      \label{3.4}    
      d ( \ti z,  \ar \Om ) \geq \frac{1}{c_-} \, . 
      \end{equation}  
      In fact  otherwise it would  follow from Lemma \ref{lemma2.2} that
      $ \max_{B (0, 20) }   u   $ is too large  for  (\ref{3.1}) to hold.

       Next let  $ M $  be a large positive number and
       $0  <   s <  e^{ -M } . $   For the moment we allow  $ M $ to vary but shall later fix it to satisfy several conditions.  We then choose
       $ s = s ( M ). $  First   given $ 0<\ti \tau < \min ( \tau, 10^{-5} )  $ choose  $  M  $ so large that if
              \begin{equation} 
              \label{3.5}
              z  \in \ar \Om \cap \bar B (0, 15)\, \, \mbox{ and }\, \, \mu ( B ( z,  t ) ) = M t^{n-1}
              \mbox{ for some $  t = t ( z )  \leq 1, $ then $ t \leq  \ti \tau.$}   
              \end{equation}
    Existence of  $ 1 \leq   M = M ( \ti \tau  ) $ follows from  (\ref{3.1}).  Next following Wolff \cite{W93} we  observe from (\ref{3.5}) that for
    each $ z \in \ar \Om \cap \bar B (0, 15) $ there exists a largest $ t = t (z)$, $s
    \leq  t \leq \ti \tau,    $  with
     either   
     \begin{equation}  
     \label{3.6}   
     \begin{array}{l}
     (\al)  \hs{.2in}   \mu (B (z, t))  =  M  t^{n-1}, t > s,  \\  \mbox{ or}  \\
     (\be) \hs{.2in}   t =  s.     
     \end{array}  
     \end{equation} 
     Using the  Besicovitch covering theorem
     (see \cite{Ma95}) we  now  obtain  a  covering  $\{ B ( z_j, t_j ) \}_1^N $
     of   $ \ar \Om \cap \bar B( 0, 15 ), $     where
       $ t_j = t( z_j )  $  is the maximal $ t $ for which either  (\ref{3.6}) $ (\al) $ or  $ (\be ) $  holds.  Moreover
       each point of   $ \bigcup_{j=1}^N  B ( z_j, t_j ) $   lies in at most  $ c = c (n) $  of  $\{ B ( z_j, t_j ) \}_1^N . $
             Let $ c_- , \ti z, $ be as in   (\ref{3.4})  and set  $ r_1 = (8 c_-)^{-1}. $  
             Choosing $ \ti \tau $ smaller (so $M$ larger)  if necessary  we may assume, thanks to (\ref{3.5}),  that
          \begin{equation}
          \label{3.7}
          \bigcup_{j=1}^N  \bar B ( z_j, 6 t_j ) \cap B ( \ti z, 6r_1 ) = \es.   
          \end{equation}           
           Also put        
           \[
           \Om'  =  \Om \cap B (0, 15 ) \sem  \bigcup_{j=1}^N  \bar B ( z_j, t_j )
           \]
           and
              \[
              D  =  \Om' \sem \bar B (\ti z, 2r_1).
              \]

\begin{figure}[h!]
\begin{center}
\begin{tikzpicture}
\coordinate (1) at (0,0);
\draw (1) circle (.2pt);
\coordinate (2) at (-3,0);
\coordinate (3) at (-2.5,0);
\draw (3) circle (.2pt);
\coordinate (4) at (1.5,0);
\draw (4) circle (.2pt);
\coordinate (5) at (3,0);
\draw (5) circle (.2pt);
\coordinate (6) at (2,2);
\draw (6) circle (.2pt);
\coordinate (7) at (2.3,-1.5);
\draw (7) circle (.2pt);
\coordinate (8) at (-1.8,-1.2);
\draw (8) circle (.2pt);
\coordinate (10) at (0,1);
\draw (10) circle (.2pt);
\coordinate (11) at (-2.5,2);
\coordinate (9) at (0.6,-2.8);
\draw[gray, fill, opacity=0.2] (1) circle (4cm);
\draw[white,fill, opacity=0.8] (2) circle (.4cm);
\draw[dashed, white, fill, opacity=0.8] (3) circle (.4cm);
\draw[dashed, white, fill, opacity=0.8] (4) circle (.3cm);
\draw[dashed, white, fill, opacity=0.8] (5) circle (.3cm);
\draw[dashed, white, fill, opacity=0.8] (6) circle (.45cm);
\draw[dashed, white, fill, opacity=0.8] (7) circle (.2cm);
\draw[dashed, white, fill, opacity=0.8] (8) circle (.4cm);
\draw[gray, fill, opacity=0.2] (9) circle (.4cm);
\draw[dashed, white, fill, opacity=0.8] (10) circle (.3cm);
\draw[dashed, white, fill, opacity=0.8] (11) circle (.5cm);
\draw[dashed, white, fill, opacity=0.8] (1) circle (.3cm);
\draw (2) circle (.2pt) node[below]{{ $z_{1}$}};
\draw (6) circle (.2pt) node[below]{{ $z_{N}$}};
\draw (9) circle (.2pt) node[below]{$ \tilde{z}$};
\draw (1)--node[midway, left]{{\tiny radius=15}}(0,-4);
\draw (0,0)--node[midway, right]{{\tiny radius=10}}(9);
\end{tikzpicture}
\label{figure2}
\caption{An example of $\Om'  =  \Om \cap B (0, 15 ) \sem  \bigcup_{j=1}^N  \bar B ( z_j, t_j )$.}
\end{center}
\end{figure}
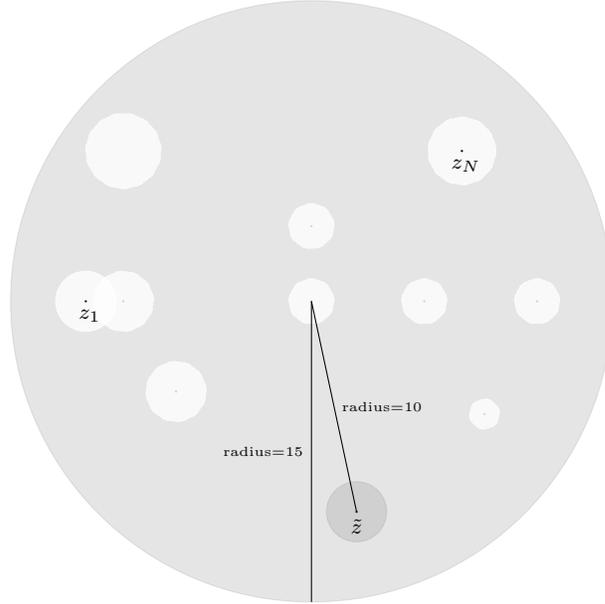
              Let
        $   u' $ be the $ p $ harmonic function in
       $ D  $  with  continuous boundary values,
      \begin{displaymath}
       u'(x)\equiv\left\{
       \begin{array}{cl}
        0& \mbox{when}\, \, x\in \ar \Om'\\
        \min\limits_{\bar B(\ti z,  2r_1)} u &\mbox{when} \, \, x\in\ar B ( \ti z, 2 r_1 ).
       \end{array}
       \right.
      \end{displaymath}
Extend $ u' $ continuously to $ \bar B (0, 15)  $ (also denoted $ u' $) by putting
\begin{displaymath}
 u'(x)\equiv\left\{
 \begin{array}{cl}
  0& \mbox{when}\, \, x\in\bar B ( 0, 15) \sem \Om'\\
        \min\limits_{\bar B(\ti z,  2r_1)} u &\mbox{when} \, \, x\in\bar B ( \ti z, 2 r_1 ).
 \end{array}
 \right.
\end{displaymath}
We  note that  $ u'  \leq u $ on  $  \ar D $  so by the  maximum principle for $ p $ harmonic 
functions  $ u' \leq u $ in  $ D. $   Also,  $ \ar D  $ is locally  $(n,r'_0)$  uniformly fat where $ r_0' $ 
depends only on  $ p, n, $ and $ r_0 $ in Theorem \ref{theorem1.1}.

To  continue the proof of   (\ref{3.3})  we shall need several lemmas.
       \begin{lemma}
       \label{lemma3.2}
       If  $ x  \in D, $  then
         \[ 
         | \nabla u' (x)   |    \leq   c\,  M^{ \frac{1}{p-1}} \, . 
         \]
         \end{lemma}
     \begin{proof}   
     To prove  Lemma \ref{lemma3.2}  let  $ x \in  D $ and choose $ y \in \ar D $ with  $  | x - y |  =  d ( x, \ar D ) = d.$   If    $ y   \in \ar  B ( z_k, t_k ) $  and  $ x  \in
      B (z_k, 2 t_k )  $ we put
       \[  f ( w )   =  A \left( | w - z_k |^{ \frac{ p - n}{ p - 1}}  -t_k^{ \frac{ p - n}{ p - 1}} \right) ,  w   \in   B ( z_k, 2  t_k ) \sem \bar B ( z_k, t_k ), \]
       when $ p > n $ and
       \[  f ( w )   =  A  \left(\log | w - z_k |  -\log t_k\right),      w   \in   B ( z_k, 2  t_k ) \sem \bar B ( z_k, t_k ) \]
       when $ p = n. $
Then  $ f \equiv 0 \mbox{   on  }  \ar B ( z_k, t_k ) $  and  $ A$ is chosen so that \[   f \equiv  \max_{B ( z_k, 2 t_k ) }  u
       \mbox{ on }   \ar B ( z_k, 2 t_k ). \]
                Then from $ u'  \leq u  $  and  the maximum  principle for  $p$  harmonic functions,   $ u' \leq   f  $  in $ B ( z_k, 2  t_k ) \sem \bar B ( z_k,  t_k ). $
        Using  this inequality and  applying Lemma \ref{lemma2.4} to  $u' $  we conclude that
         \begin{equation}
         \label{3.8}   
         | \nabla u' (x)   |   \leq  \frac{c}{d} \, \, u'(x)  \, \leq \frac{c}{d}\, \,  f(x) \leq  \, \frac{c^2}{t_k} \, \max_{B(z_k, 2 t_k)}  u  . 
         \end{equation}
        Also from  Lemma \ref{lemma2.3} and  (\ref{3.5})-(\ref{3.7})   we find that
\begin{equation}
\label{3.9}
t_k^{1-p}    \max_{B(z_k, 2 t_k)}  u^{p-1}  \leq  c\,  t_k^{1-n}\,  \mu ( B (z_k, 4 t_k) ) \leq c^2   M. 
\end{equation}
       Taking  $ 1/(p-1) $  powers of  both sides of (\ref{3.9}) and  using the  resulting inequality in  (\ref{3.8})  we get   Lemma \ref{lemma3.2}
       when  $ y  \in  \ar B ( z_k, t_k ) $ and $ x \in D \cap B ( z_k, 2 t_{k} ). $     If $ y  \in  \ar B (0, 15) $  or  $ \ar B ( \tilde z , 2 r_1 ) $  a  similar argument applies.
       Thus there is  an open   neighborhood, say  $ W, $  containing $  \ar D $  for which  the conclusion of  
       Lemma \ref{lemma3.2} is valid  when $ x \in W \cap  D. $
From  this  conclusion, Lemma \ref{lemma2.6}  applied to $u',$ and  a  maximum  principle  for weak  sub solutions to $ L, $  
we conclude that Lemma \ref{lemma3.2}  is valid in $ D. $ 
\qed
\end{proof}
              Next we prove
\begin{lemma} 
\label{lemma3.3}
The functions $| \nabla u' |^{p-2}\,  |u'_{x_k x_i}| $ for $1\leq i,k\leq n$ are all integrable on $D$
\[
{\ds \sum_{i,k=1}^n  \, \int\limits_{D}}| \nabla u' |^{p-2}\,  |u'_{x_k x_i}| \,  dx  \,  <  \,   \infty 
\] 
\end{lemma}
\begin{proof}
         Let  $ \La  \subset \ar \Om' $ be the set of points where  $ \ar \Om' $ is not smooth.   
         Clearly  $ H^{n-1} (\La) = 0 .$
        If   $ \hat x \in \ar  D \sem\La, $ then   $ \hat x $ lies in exactly one of 
        the finite number of spheres which contain points of  $ \ar D. $ Let  $ d' ( \hat x ) $ denote the distance from
        $ \hat x $ to the union of spheres not containing $ \hat x $ 
        but containing points of  $ \ar D. $  If $ d'  = d' ( \hat x ) < s/100, $ then from Lemma \ref{lemma2.5} applied to
        $ u' $  we see that each component of
         $ \nabla u'  $ has a  H\"{o}lder  continuous extension to  $ B ( \hat x,  3 d'/4 ). $   
         Also from H\"older, Lemma \ref{lemma2.5},  and   Lemma \ref{lemma3.2} we see that
      \begin{equation}
      \label{3.10}
      \begin{aligned}
  \frac{1}{c} {\ds \sum_{i, k =1}^n\, \,   \int\limits_{ D \cap B ( \hat x, \frac{d'}{8} )}  } \, | \nabla u' |^{p-2} \,  | u'_{x_i x_k} | \, dx
          &\leq (d')^{\frac{n}{2}}    M^{\frac{p-2}{2(p-1)}}  {\ds  \sum_{i, k =1}^n\, \,  \left(  \int\limits_{
         D \cap B ( \hat x, \frac{d'}{8} )}  \, | \nabla u' |^{p-2}  \,  | u'_{x_i x_k} |^2  \, dx \right)^{\frac{1}{2}} }\\
         & \leq   c (d')^{\frac{(n - 2)}{2}}  \,   M^{\frac{p-2}{2(p-1)}}
            {\ds  \left(  \int\limits_{ D \cap B ( \hat x, \frac{d'}{2} )}  \, | \nabla u' |^{p}  \,  dx \right)^{\frac{1}{2}} }
          \\&  \leq  \, c^2
              M \,  (d')^{(n-1)}.
              \end{aligned}
              \end{equation}
              To  prove  Lemma \ref{lemma3.3}  we assume as we may that $ B ( z_l, t_l ) \not \subset  B ( z_\nu, t_\nu ) $ when $ \nu \not = l, $ since otherwise we discard one of  these balls.   Also from   a  well known  covering theorem we get  a  covering  $  \{ B (y_j, \frac{1}{20} d' (y_j) ) \}  $  of  $ \ar D \sem \La $  with
   $  \{ B (y_j, \frac{1}{100} d' (y_j) ) \}, $ pairwise disjoint.   From   (\ref{3.10}) we  find that
          \begin{equation}  
          \label{3.11}
          \begin{array}{ll} 
             {\ds \sum_{i,j,k}\, \,  \int\limits_{ D \cap B ( y_j, \frac{1}{8} d'(y_j) ) }| \nabla u' |^{p-2} |u'_{x_k x_i}|  dx } \,
          & \leq c \,   M {\ds  \sum_{j} } (d' (y_j))^{n-1}   \\ \\ &   \leq c^2 M H^{n-1} ( \ar D ). 
          \end{array}  
          \end{equation}

           For short we now write
          $ d ( x ) $ for $ d ( x, \ar D) $ and  choose a  covering $ \{  B ( x_m, \frac{1}{2} d ( x_m )\} $ of  $ D $ with
   $ \{  B ( x_m, \frac{1}{20} d ( x_m )\}, $ pairwise disjoint.
   We note that if  $ x  \in D $ and $ y \in \ar D $ with  $ | y - x | = d ( x ), $  then  $ y \in \ar D \sem \La. $
Indeed otherwise $ y $ would be on the boundary of  at least two balls contained in  the complement of  $ D $ and so
by the no containment assumption above, would have to intersect $ B ( x, d (x) ),$ which clearly is  a  contradiction.
Also  we note that if  $ d ( x ) \leq  1000 s, $  then  $ d ( x ) \leq   \kappa \,  d' (y) $  where  $ \kappa  $  can depend 
on various  quantities  including the configuration of the $B(z_k,t_k)$ balls  but is  independent of
$  x \in D $ with  $ d ( x )  \leq 1000 s. $    Indeed  from the no containment  assumption one just  needs  to  consider
$ d ( x )/d'(y) $ as  $  d(x), d'(y)    \rar  0.  $  To do this  suppose  $ z \in  \La $ with  $ | y - z | = d'(y). $ 
Then one  sees,  from consideration of  half planes containing $ z $  and  tangent to  two intersecting spheres,  
that  $x, y $ eventually lie in a  truncated cone
of height  $ \ga $ with vertex at  $ z, $ and of  angle opening  $ \leq  \al <  \pi/2, $ where $ \al, \ga  $  are 
independent of $ x, y, z. $  Moreover the complement of  this truncated cone in a   certain  hemisphere of  
radius $ \ga $   with center $ z $   lies  outside of
$  \Om' $.  Then  a  ballpark estimate using trigonometry  gives  $  d' (y ) \geq  ( 1 - \sin \al)  d ( x)$(See Figure
\ref{figure1}).
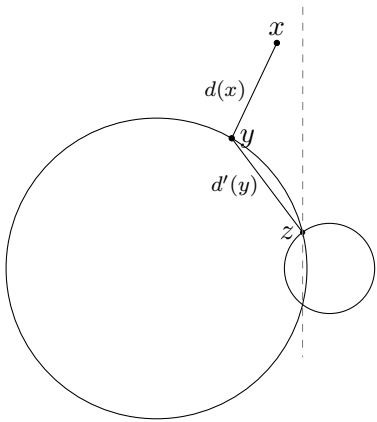
\begin{figure}[h!]
\begin{center}
\begin{tikzpicture}
\coordinate (1) at (0,0);
\draw[name path=circle 1] (1) circle (2cm);
\coordinate (2) at (2.3,0);
\draw[name path=circle 2] (2) circle (0.6cm);
\fill[name intersections={of=circle 1 and circle 2,total=\t}]
                    (intersection-1) circle (1pt) node[left]{\large{$z$}};
\coordinate (3) at (1, 1.732050808);
\draw[fill] (3) circle (1pt) node[right]{\large{$y$}};
\coordinate (4) at (1.6,3);
\draw[fill] (4) circle (1pt) node[above]{\large{$x$}};
\draw (3)-- node[midway, left] {$d(x)$} (4);
\draw (3)--node[midway,left] {$d'(y)$}(intersection-1);
\draw[shorten >=-0.7cm, shorten <=-3cm, dashed, gray] (intersection-1)--(intersection-2);
\end{tikzpicture}
\caption{$  d' (y ) \geq  ( 1 - \sin \al)  d ( x)$.}
\label{figure1}
\end{center}
\end{figure}

From this analysis and our choice of covering of $D$ we see that  for a given $ B ( x_m, \frac{1}{2}d ( x_m ) ) $  with $ d ( x_m ) <  1000 s, $
there exists $ j = j ( m )$   with  $ B ( x_m,\frac{1}{2} d ( x_m ) ) \subset  B ( y_j , \kappa' d' (y_j) ) $ for some $ 0< \kappa' < \infty $ independent of  $ m. $

Let  $ S_l, l = 1, 2, 3, $ be disjoint sets of  integers defined as follows.
\begin{displaymath}
 \left\{
 \begin{array}{ll}
  m \in  S_1 & \mbox{if}\, \, \, \,  d ( x_m  ) \geq  1000 s,\\
  m \in S_2 & \mbox{if}\, \, \, \, m \not \in S_1\, \, \mbox{and there  does not exist}\, \, j \, \, \mbox{with}\, \,  B ( x_m, \frac{1}{2} d (x_m))  \subset  B ( y_j, \frac{1}{8} d' (y_j) ),\\
 m\in S_{3}& \mbox{if} \, \, \, \, m\, \, \mbox{not in either}\, \,   S_1\, \,  \mbox{or}\, \, S_2.
 \end{array}
 \right.
\end{displaymath}
        Let
        \[
        K_l  =   \sum_{m\in S_l}\, \, \int\limits_{ D \cap B ( x_m, \frac{1}{2} d(x_m) ) } | \nabla u' |^{p-2} |u'_{x_k x_i}|  dx \, \, \mbox{for}\, \,   l =1, 2, 3. 
        \]
                Then
                 \begin{equation}
                 \label{3.12}  
                 {\ds \int\limits_{D}}| \nabla u' |^{p-2} |u'_{x_k x_i}|  dx  \,  \leq  \,  K_1 \,  + \, K_2 \,  + \, K_3  .
                   \end{equation}
From Lemma \ref{lemma2.4} and the same argument as in (\ref{3.10}) we see that
   \begin{equation}
   \label{3.13}
   K_1  \leq  c\, M \sum_{m\in S_1}  d(x_m)^{n-1}
       \leq  c^2 M  s^{-1} 
       \end{equation}
       where we have used disjointness of our covering,  $ \{ B ( x_m, \frac{1}{20} d (x_m) ) \} $ .
Using disjointness of  these balls  and  (\ref{3.11})  we get
\begin{equation}  
\label{3.14}
K_3  \leq  c\, M  H^{ n-1 } (\ar D).  
\end{equation}
 Finally   if  $ m \in S_2, $ then as discussed earlier  there exists  $ j  =  j ( m ) $  with  $ d ( x_m ) \approx  d' (y_j), $   
 where  proportionality constants are independent of  $ m, $ so
       $  B ( x_m, \frac{1}{2} d ( x_m ) ) \subset  B ( y_j, \kappa' d' ( y_j ) ). $   From disjointness of    $ \{ B ( x_m,
       \frac{1}{20} d (x_m)  ) \} $    and a  volume type argument we deduce that each  $ j $   corresponds to at most  $ \kappa'' $ integers $ m \in S_3  $ where $ \kappa'' $ is independent of $ j. $
 From   this fact, (\ref{3.10}), and disjointness of  $  \{ B ( y_j , \frac{1}{100}  d' (y_j)  )  \} $  we conclude that
   there is  a  $ \ti \kappa,  0 < \ti \kappa <  \infty, $  with
   \begin{equation}
   \label{3.15}
   K_2  \, \leq \,  \ti \kappa  M \sum_{m \in S_2}  d (x_m)^{n-1}   \leq
       \ti \kappa^2   M \sum_{j}  d' (y_j)^{n-1}    \,      \leq \ti \kappa^3 M H^{n-1} ( \ar D )  . 
       \end{equation}
       Using  (\ref{3.13})-(\ref{3.15}) in (\ref{3.12}) we find that Lemma \ref{lemma3.3}  is  valid.
       \qed
  \end{proof}
  Recall that  $ \nabla u' $  is  H\"{o}lder  continuous in $ \bar D \sem \La. $   
  We use this recollection and  Lemmas \ref{lemma3.2}, \ref{lemma3.3}, to prove
  \begin{lemma}  
  \label{lemma3.4}
  There exists $ c = c (p, n ) $  such that
    \[
    \int\limits_{ \ar D }  | \nabla u' |^{p-1}  | \log | \nabla u' | | \, d H^{n-1}  \, \leq \, c  \log M.   
    \]
    \end{lemma}
\begin{proof} 
From smoothness of $ u' $ in $ \bar D \sem \La, $ (\ref{1.2}), and integration by parts, we see that
        \begin{equation}
        \label{3.16}  
        d \mu' / dH^{n-1} =  | \nabla u'  |^{p-1} > 0 \mbox{ on }  \ar \Om' \sem \La.  
        \end{equation}

    We claim for some $ c = c ( p, n ) \geq 1   $   that
   \begin{equation}   
   \label{3.17}
   \frac{1}{c} \leq  \mu' ( \ar \Om'  \cap B ( 0,  10  ))   \leq \mu'  ( \ar \Om' ) \leq  c.  
   \end{equation}   
   To prove the left hand inequality in  (\ref{3.17})  we first observe from $ u ( \ti z )  = 1 $ and  
   Lemmas \ref{lemma2.1}, \ref{lemma2.2}, and (\ref{3.7}) 
   that  $ c^* u' \geq 1 $ on  $ \ar  B  ( \ti z, 4 r_1 ) $  for some  $ c^* = c^* (p,n) \geq 1. $  
   Let  $ l $ denote the line from the origin through $ \ti z $ and  let
   $ \ze_1 $  be the point on this line segment in  $  \ar B ( \ti z, 4 r_1 )  \cap B ( 0, 10). $  
   Let  $ \ze_2  $ be the point on  the line segment from $ \ze_1 $ to the origin with  $ d ( \ze_2, \ar \Om' )  =  \frac{1}{20}  r_1 $
    while  $ d ( \ze, \ar \Om' )  >   \frac{1}{20}  r_1 $ at every other point on  the line segment from
    $ \ze_1 $  to $ \ze_2. $  Then from  (\ref{3.4}), Lemma \ref{lemma2.1}, 
    and the above discussion we see that $ c^{**} u ( \ze_2 ) \geq  1 $ for some $ c^{**} (p, n ) \geq  1. $   
    Also,  $  B ( \ze_2,  \frac{1}{2}  r_1 ) \subset B (0, 10). $  Let  $  \hat \ze $ 
    be the point in $ \ar \Om' $ with  $  | \hat \ze - \ze_2 |  = d ( \ze_2, \ar \Om' ). $   
    Applying Lemma \ref{lemma2.3} with   $w =  \hat \ze,   r = 2 d ( \ze_2, \ar \Om' ), $ 
    we deduce that the left hand inequality in  (\ref{3.17}) is valid.  
    The  right hand  inequality  in this claim follows  once again from Lemma \ref{lemma2.3} and $ u' \leq u. $

    Let
    \[
\log^+ t  =  \max \{ \log t,  0  \}
    \]
and
\[
\log^- t =  \log^+ (1/t)
\]
for  $ t  \in (0, \infty). $   From Lemma \ref{lemma3.2}, (\ref{3.16}),  (\ref{3.17}),  and $ H^{n-1} ( \La ) = 0 $  
we  obtain for some $ c = c ( p, n) \geq 1, $
     \begin{equation}
     \label{3.18}  
     \int\limits_{\ar \Om'}
     |\nabla u' |^{p-1}  \, \log^+ | \nabla u' | \, dH^{n-1}
     \, \leq c  \, \log M \, \,  \mu' ( \ar \Om' ) \, \leq c^2  \log M.  
     \end{equation}

To estimate  $ \log^{-} | \nabla u' |,  $  fix $ \eta,  - \infty  \leq \eta  \leq - 1, $ and
  let  $ v' (x)  = \max \{ \log | \nabla u' |, \eta \} $ when  $ x \in \bar D \sem \La. $   Given   a  small
   $ \he > 0  $  let
  \[   \La ( \he ) = \{ x  \in  D : d ( x, \La )  \leq \he \}
  \, \, \mbox{and}\, \,  D ( \he ) =  D \sem  \La (\he).  \]
  From Lemma \ref{lemma2.4}  and  Lemmas \ref{lemma3.2}, \ref{lemma3.3} 
  we deduce that   $  |\nabla u' |^{p-2}  u'_{x_i } $ has a  $  W^{1,2} ( D ( \he ) ) $ 
  extension with distributional derivative $  ( |\nabla u' |^{p-2} u'_{x_i} )_{x_j}  = 0 $  when
   $ | \nabla u' | = 0 $  and $ 1 \leq i, j \leq n. $   Moreover  these functions are  continuous
  near  $ \ar D ( \he ) $  thanks to Lemmas \ref{lemma2.4} and \ref{lemma2.5}.  
  Let   $ \{ b_{ik} \}, L,   $  be  as defined  in  (\ref{1.3}), (\ref{1.4})  relative to  $u' $  and  note  from the above discussion that
   \[   
   L u'  (x)  =  (p-1)  \nabla \cdot \left( |\nabla u' |^{p-2}  \nabla u' \right) ( x ) = 0  
   \]
   exists pointwise for almost every $ x \in D ( \he ). $ Put
  \begin{equation}  
  \label{3.19}    
  I (\he)  =  \int\limits_{D(\he)}  Lu'  \, v'  \, dx + \int\limits_{D(\he)}   \sum_{i,k= 1}^n b_{ik}  u'_{x_k} \, v'_{x_i}  dx  =   I_1 ( \he ) + I_2 ( \he ) .  
  \end{equation}

        Clearly   $ I_1 (\he )  = 0. $    To handle  $ I_2 ( \he ) $  we first argue as  in  (\ref{3.8}), i.e, use a  barrier argument,   
        and  second  use  Lemma \ref{lemma2.5} to deduce for some $  c = c ( p, n ) \geq 1, $ that if
  $ r_2 = ( 1 + c^{-1} ) r_1, $ then
     \begin{equation}
     \label{3.20}
     \frac{1}{c} \, \leq \,  | \nabla u'  | \,  \leq c\, \, \mbox{on}\, \,  \bar B ( \ti z, 2 r_2 ) \sem  B ( \ti z,  2 r_1).  
     \end{equation}
Let  $ \psi  $  be  infinitely differentiable and $0\leq \psi \leq 1$ on  $ \rn{n} $  
with  $ \psi \equiv 1 $ on  $ \rn{n}  \sem  B ( \ti z,  2 r_2 ) $ and
$ | \nabla \psi | \leq c r_1^{-1} \leq c^2 ,$
  where the last inequality follows from  (\ref{3.4}) and the definition of  $ r_1. $    
  Suppose  also  that  $ \psi  $ vanishes in an open  set containing   $ \bar B ( \ti z,  2 r_1 ).  $     Then
   \begin{equation} 
   \label{3.21}
   \begin{array}{ll}  
   I_2 (\he)  &=    {\ds \int\limits_{D(\he)}   \sum_{i,k= 1}^n b_{ik}  (\psi  u' )_{x_k} \, v'_{x_i}  dx  }  +  { \ds \int\limits_{D(\he)}   \sum_{i,k= 1}^n b_{ik}  ( (1 - \psi)  u' )_{x_k} \, v'_{x_i}  dx }  \\ \\
   &   =  I_{21}  ( \he ) + I_{22} ( \he ).   
   \end{array}  
   \end{equation}

  From Lemmas \ref{lemma2.4}, \ref{lemma2.5}, (\ref{3.20}), and an argument similar to the one in (\ref{3.10})
we  deduce  for some  $ c = c (p,n) \geq 1 $ that
\begin{equation}
\label{3.22}
| I_{22} |  \leq  c.   
\end{equation}

  Turning to   $ I_{21} (\he )  $ we note  from Lemmas \ref{lemma3.2} and \ref{lemma3.3} 
  that the integrand in the  integral defining  $ I_{21} (\he ) $ is dominated  
  by an  integrable function independent of $ \he. $  Thus from the  Lebesgue dominated convergence theorem,
  \begin{equation}
  \label{3.23}  
  \lim_{ \he \to 0}  I_{21} (\he )   =
  \int\limits_{D}   \sum_{i,k= 1}^n b_{ik}  (  \psi  u' )_{x_k} \, v'_{x_i}  dx  = I' .  
  \end{equation}
    We assert that  
    \begin{equation}  
    \label{3.24}
    I'  \leq 0 . 
    \end{equation}
  To verify this assertion  let  $ u''  =  u'' (\de ) =  \max ( u' - \de, 0 ).  $  
  Using the convolution of  $  \psi u'' $ with an approximate identity and taking limits we see from  Lemma \ref{lemma2.6}   that
  \[ 
  \int\limits_{D}   \sum_{i,k= 1}^n b_{ik}  (  \psi  u'' )_{x_k} \, v'_{x_i}  dx  \leq 0 \, .    
  \]
  Now again  from  Lemmas \ref{lemma3.2} and \ref{lemma3.3}, 
  we observe that the above integrand is  dominated by an integrable 
  function independent of $  \de. $  Using this fact, the above inequality, and the 
  Lebesgue dominated convergence theorem  we  get  assertion  (\ref{3.24}). 
  Using  (\ref{3.19}) - (\ref{3.24}) we conclude (since $ I_{22} (\he ) $ is  independent of $\he$) that
         \begin{equation}
         \label{3.25} 
         \lim_{\he \to 0} I (\he )   \leq  c. 
         \end{equation}
On the other hand  from  \cite[Chapter 5]{GE92} and the discussion above (\ref{3.19}) we see that  integration by parts can be used to get
\begin{equation}  
\label{3.26}   
I_1 ( \he )  = - I_2 (\he )  +
\int\limits_{ \ar D (\he )  } v'  \,\sum_{i,k = 1}^n  \,  b_{ik} \,  u'_{x_k} \nu_i dH^{n-1}    
\end{equation}
where $ \nu = (\nu_1, \dots \nu_n ) $ is the outer unit normal to  $ \ar D ( \he ). $    
From (\ref{3.20})   we see that
  \begin{equation}
  \label{3.27}  
  \left|\, \int\limits_{ \ar B ( \ti z, 2 r_1 ) }
 v'  \,\sum_{i,k = 1}^n  b_{ik} \,  u'_{x_k} \nu_i dH^{n-1}  \right|  \leq  c = c (p,n).    
 \end{equation}
 From  Lemma \ref{lemma3.2},  dominated  convergence,  and the definition of  $ D (\he ), $  we have
 \begin{equation}  
 \label{3.28} 
 \int\limits_{ \ar D ( \he) \sem \ar B ( \ti z, 2 r_1 ) }
 v'  \,\sum_{i,k = 1}^n  b_{ik} \,  u'_{x_k} \nu_i dH^{n-1}   \to   \int\limits_{ \ar \Om' \sem \La  }   v' 
 \,\sum_{i,k = 1}^n  b_{ik} \,  u'_{x_k} \nu_i dH^{n-1}  \mbox{ as } \he \to 0.  
 \end{equation}

 Observe that $ \nu  =   - \frac{\nabla u'}{|\nabla u' |} $ on  $ \ar \Om'\setminus\Lambda. $
  From this observation and  (\ref{1.4})  we calculate
 \begin{equation} 
 \label{3.29}
 \begin{array}{ll}  
   \, {\ds \sum_{i,k = 1}^n  b_{ik} \,  u'_{x_k}}  \nu_i & = -
 {\ds \sum_{i,k = 1}^n | \nabla  u' |^{ p - 5} [ ( p - 2 )
    (u')_{x_i}^2   (u')_{x_k}^2  + \de_{ik}
    | \nabla  u' |^2 ]  u_{x_i}  \, u_{x_k} } \\ \\  &  =  -  (p-1) | \nabla u' |^{p-1} .  
    \end{array}  
    \end{equation}

From (\ref{3.19}), (\ref{3.25})-(\ref{3.29}) we find that
\begin{equation}
\label{3.30} 
- (p-1)  \int\limits_{ \ar \Om' } v \,  |\nabla u' |^{p-1}  \,   \, d H^{n-1}
\leq  \lim_{\he \to 0}  I ( \he )  +  c \,   \leq \, 2 c.  
\end{equation}
Letting  $ \eta \to - \infty $ in  (\ref{3.30}) and using  the monotone convergence theorem we see that
(\ref{3.30})  holds with $ v $ replaced by  $ \log | \nabla u |. $  Finally from  (\ref{3.30}) for
$ \log |\nabla u |  $  and  (\ref{3.18})  we conclude the validity of Lemma \ref{lemma3.4}.  
\qed
\end{proof}

With these lemmas in hand, we go back to the proof of (\ref{3.3}) and Proposition \ref{proposition3.1}b.  We note  from Lemma \ref{lemma2.3} and $ u' \leq u $  that for given  $ j, 1 \leq j \leq N, $
\begin{equation}
\label{3.31}
t_j^{1-n}\,  \mu' ( \bar{B} ( z_j, t_j ) )  \leq  c \,  t_j^{1-p}\,  \max_{B(z_j , 2t_j )} u^{p-1} \leq  c^2\,    t_j^{1-n}\,   \mu (  B ( z_j, 4 t_j) ) .     
   \end{equation}
  For  given  $ A  > > 1,  $  we see from (\ref{3.6}) that  $ \{ 1, 2, \dots, N \} $ can be divided into disjoint subsets $ \Ph_1, \Ph_2,
  \Ph_3, $ as follows.
  \begin{displaymath}
   \left\{
   \begin{array}{ll}
    j\in \Ph_ 1 & \mbox{if}\, \, t_j >  s, \\
    j \in \Ph_2 & \mbox{if}\, \, t_j = s\, \, \mbox{and}\, \, | \nabla u' |^{p-1}  (x) \geq  M^{- A },
 \, \mbox{for some}\, \,  x  \in  \ar \Om' \cap \ar  B ( z_j, t_j ) \sem \La \\
 j\in \Ph_3 & \mbox{if $j$ is not in $ \Ph_1 $ or $ \Ph_2$.}
   \end{array}
   \right.
  \end{displaymath}
        Let  $ t_j'  = t_j $ when $ j \in \Ph_1 $ and $ t_j' = 4s  $ when $ j \in \Ph_2. $
    To prove (\ref{3.3}) set
   \[
   E  = \ar \Om  \cap  \bigcup_{j\in \Ph_1 \cup \Ph_2}  B ( z_j, t_j' ). 
   \]
   To estimate $ \ph^{\la}_{\tau} ( E ) $ we first observe that if
        \begin{equation}
        \label{3.32}   
        x \in \bigcup_{j\in \Ph_1 \cup \Ph_2}  B ( z_j, t_j' )
        \mbox{ then  $ x $ lies in at most $ c = c ( n ) $ of
   $ \{ B ( z_j, t_j' ) \}. $ }   
   \end{equation}
     This observation can be proved using $ t_j \geq  s, 1 \leq j \leq N, $
   a volume type  argument,  and  the fact that
   $ \{ B ( z_j, t_j)\}_1^N  $ is a  Besicovitch covering of  $ \ar \Om \cap \bar B (0, 15). $
   If $ j \in \Ph_2 $  we get  from  (\ref{3.8}),  (\ref{3.31}),  that for some $ c = c (p, n ) \geq 1 $
   \[   
   M^{-A} \leq | \nabla u' ( x ) |^{p-1}  \leq  c \,   s^{1-n}   \mu (  B ( z_j, 4 s) ) \, . 
   \]
      Rearranging this inequality, summing, and using (\ref{3.1}), (\ref{3.32}), we see that
        \[
        \sum_{j\in \Ph_2} (t_j')^{n-1} \,  \leq  \, \ti c  \, M^A \mu ( \bigcup_{j \in \Ph_2}   B ( z_j, t_j' ) )
        \leq (\ti c)^2 \,  M^A  
        \]  
        provided $ \ti c = \ti c (p, n ) $ is large enough.  Now since $ t_j' = s $ for all
    $  j \in \Ph_2 $ we may for given $ A, M, \ep $  choose $ s > 0 $  so small  that
    \begin{equation}  
    \label{3.33}  
    s^{1-n}   \la ( s )   \leq  \frac{\ep}{ 2 (\ti c)^2 M^A } 
    \end{equation}
where we have used the definition of $\lambda$.
    Using this choice of  $ s $ in the above display  we get
    \begin{equation}
    \label{3.34}    
    \sum_{j\in \Ph_2} \la (t_j') \,  \leq  \,  \ep/2.  
    \end{equation}
     On the other hand  we may suppose  $ \bar \tau $ in  (\ref{3.5})   is so small that
     $ \la ( t_j ) \leq  t_j^{n-1} $ for $ 1 \leq j \leq N. $   Then from (\ref{3.1}), (\ref{3.6}), and (\ref{3.32}), we see that
  \begin{equation} 
  \label{3.35}   
  \begin{array}{ll}
  {\ds  \sum_{j \in  \Ph_1}  \la(t_j') } &  \leq
   { \ds \sum_{ j \in  \Ph_1} } (t_j')^{n-1} \\ \\ &  =    M^{-1}  {\ds \sum_{j \in \Ph_1} } \mu ( B ( z_j, t_j ) ) \leq  \ep/2  
   \end{array}
  \end{equation} 
  provided $ M = M ( \ep ) $ is chosen large enough.
    Fix $ M $ satisfying all of  the above requirements.  In view of  (\ref{3.34}), (\ref{3.35}),  
    we have  proved  the  left hand  inequality in (\ref{3.3})
for $ E $ as defined above, i.e. $\phi^{\la}_\tau (E) \leq \epsilon$.

To prove the right hand inequality in  (\ref{3.3}) we use Lemma \ref{lemma3.4}  and  the definition of  $ \Ph_3 $ to obtain
  \begin{equation} 
  \label{3.36}
  \begin{aligned}
     \mu' \left(  \ar \Om' \cap  \bigcup_{j\in \Ph_3}  \bar{B} ( z_j, t_j)
  \right)  & \leq  \mu' \left( \{ x \in \ar \Om' :  | \nabla u' (x) |^{p-1}  \leq M^{-A} \} \right) \\
    & \leq    (p-1) ( A  \log M )^{-1}  {\ds \int_{\ar \Om' } } | \nabla u' |^{p-1}  \, | \log |\nabla u' | |  d H^{n-1} \\
    &\leq \frac{c}{A}.
    \end{aligned}
     \end{equation}
    Choosing  $ A = A ( n ) $ large enough  we have from     (\ref{3.17}),   (\ref{3.36}),
  \begin{equation} 
  \label{3.37}
  \mu' \left(    \bigcup_{j\in \Ph_1 \cup \Ph_2}  B (0, 10) \cap  \bar{B} ( z_j, t_j)
  \right)  \geq  \mu' ( B (0, 10 ) ) - \mu' \left(   \bigcup_{j\in \Ph_3}  \bar{B} ( z_j, t_j)
  \right) \geq   c_*^{-1}
     \end{equation} 
     for some $ c_* (p,n). $
    Finally from (\ref{3.31}), (\ref{3.32}), and (\ref{3.37}), we get for some $ c = c(p, n ) \geq 1 $ that
    \begin{equation} 
    \label{3.38}
    \mu( E )   \geq c^{-1}    \sum_{j\in \Ph_1 \cup \Ph_2} \, \mu ( \bar{B} ( z_j, t'_j)
  )  \geq  c^{-2}  \sum_{j \in \Ph_1 \cup \Ph_2} \mu' ( \bar{B} ( z_j, t_j) )   \geq c^{-3}.   
  \end{equation}
For $j\in \Phi_1$ we have used the definition of $t_j$ so that 
\[
\mu( B(z_j , 4t_j)) < M 4^{n-1} t_j^{n-1} =4^{n-1} \mu( B(z_j,t_j)) =4^{n-1} \mu(B(z_j,t'_j)) 
\]
      Thus  (\ref{3.3}) is valid.  Proposition \ref{proposition3.1} follows from (\ref{3.3})  and  our earlier  remarks.  
      \qed
      \end{proof}
    \subsection{Proof of  Theorem \ref{theorem1.1}}   
    \label{subsection3.2}
    Next we show for $ \la, Q $ as in Proposition \ref{proposition3.1}  that there exists a Borel set $  Q_1 $ with
    \begin{equation} 
    \label{3.39}   
    Q_1 \subset  Q , \,  \hat \mu ( \ar O \cap B ( \hat z, \rho) \sem Q_1 )
    = 0,  \, \mbox{  and  }  H^\la ( Q_1 ) = 0.  
    \end{equation}
     To prove  (\ref{3.39})  we assume, as we may, that
     $\hat  \mu ( \ar O \cap B ( \hat z, \rho) ) <  \infty $ since  otherwise we can write  $ \ar O  \cap B ( \hat z, \rho ) $ 
     as a countable union of Borel sets with
finite $ \hat \mu $ measure and apply the following argument in each set.  Under this assumption we can use
Proposition \ref{proposition3.1} and a  Vitali type covering argument (see \cite{Ma95}), as well as  induction
to get compact sets $ \{F_l\},  \,  F_l  \subset  Q, $   with  $ F_k \cap F_j = \es, \, k \not = j,  $
    $ \hat \mu ( F_1 ) > 0 $  and with
    \[  
    c'   \hat \mu ( F_{m+1}  )  \geq   \hat  \mu ( Q \sem \bigcup_{l = 1}^m  F_l ),  m = 1, 2, \dots, 
    \]
    for some $ c' = c'  (p, n ) \geq 1. $   Moreover
    $ H^{\la} ( F_l ) =  0 $ for  all $l.$  Then   $ Q_1  =  \bigcup_{l=1}^\infty  F_l  $  
    has the desired properties as follows from measure theoretic arguments.

     To prove
    Theorem \ref{theorem1.1} we first  note from a  covering argument as in  \cite{LNP11} or \cite{W93} that if
    \[ 
    P =  \{ x \in \ar O \cap B ( \hat z, \rho ) :  \limsup_{t \to 0}\, \,  \frac{
     \hat \mu ( B ( x, t ) )}{t^{n-1}}  >  0  \},  
     \]
     then $ P $ has $\si$  finite $ H^{n-1} $ measure.
    For completeness we  prove this statement after finishing the proof of  Theorem \ref{theorem1.1}.
    Thus  to prove Theorem \ref{theorem1.1}  it suffices to  show that
    \begin{equation} 
    \label{3.40}  
    \hat \mu ( Q_1 \sem P ) = 0.  
    \end{equation}   
    Indeed otherwise from Egoroff's theorem there exists a compact set $ K \subset  Q_1 \sem P $ with
    \begin{equation}  
    \label{3.41}  
    \hat \mu ( K ) > 0\, \,  \mbox{and}\, \, {\ds \lim_{t \to 0} } \, \, \frac{\hat \mu ( B ( x, t ) )}{t^{n-1}} = 0 \,\, \mbox{uniformly for}\, \, x \in K. 
    \end{equation}  
    Choose $ \al_k \in (0, 1), k =  1, 2, \dots, $   with
    $ \al_{k+1} <  \al_k/2 $ and so that
    \[  
    \sup_{0  < t \leq \al_k }    \frac{\hat \mu ( B ( x, t ) )}{t^{n-1}}  \leq 2^{-2k} \mbox{ for all } x \in K. 
    \]
    Let $ \al_0 = 1. $ With  $ (\al_k )_0^\infty  $ now chosen, define $ \la ( t ) $  on  $(0, 1]$ by
    $ \la ( \al_k ) =  2^{-k}  ( \al_k)^{n-1}, k = 0, 1, \dots, $  and  $ t^{1-n}  \la (t)  $ is linear for
    $ t $ in the intervals
     $ [\al_{k+1}, \al_k ] $  for $ k = 0, 1, \dots $   Put $ \la (0) = 0. $  Clearly   $ t^{1-n}  \la ( t ) \rar 0 $  as $ t \rar 0. $   Also,  if
     $ \al_{k+1} \leq t \leq \al_k, $  and  $ x \in K, $ then
     \begin{equation}  
     \label{3.42}
     \frac{ \hat \mu ( B ( x, t ) }{ \la (t) } \leq   2^{1 - k}. 
     \end{equation}
                     Given $ m $ a positive integer  we note from (\ref{3.39}) 
                     that there is a covering  $ \{ B ( x_j, r_j ) \} $ of  $ K $ with $ r_j \leq \al_m / 2$ for all $ j $ and
    \[
    \sum_j  \la ( 2 r_j  )  \, \leq \,  1  
    \]
    We may assume that                  there is  an  $ x_j' \in  K \cap B ( x_j, r_j ) $ for each $ j $ 
    since otherwise we discard  $ B ( x_j,  r_j ). $   Moreover  from  (\ref{3.42})  we see that
    \[  
    \hat \mu ( K )  \leq  \sum_j  \hat \mu ( B ( x_j',  2 r_j ) ) \leq  2^{1 - m}    \sum_j  \la (2 r_j )  \leq  2^{1-m}  .  
    \]
    Since $m$ is  arbitrary  we have reached a  contradiction to  $ \hat \mu (K) > 0 $ in (\ref{3.41}). From this contradiction
    we conclude  first  (\ref{3.40})  and second  Theorem \ref{theorem1.1}.
    \qed

To prove that $ P $ has $ \si $ finite $ H^{n-1} $ measure
   we  once again may assume   $ \hat \mu ( \ar O  \cap B ( \hat z , \hat \rho ) )  < \infty . $
     Let
     \[
     P_m =  \{ x \in  P  :  \limsup_{t \rar 0}   t^{ 1 -n}
    \hat \mu ( B ( x, t ) )  >  \frac{1}{m} \} 
    \]
    for $ m = 1, 2, \dots $
                                              Given $ \de > 0 $  we choose a Besicovitch covering
                                               $ \{ B ( y_i,  r_i ) \}
    $   of $  P_m   $ with $ y_i \in P_m, \,
    r_i \leq \de, B ( y_i,  r_i ) \subset  B ( \hat z, \rho ) $   and  
    \[
    \mu ( B ( y_i,  r_i ) ) \,  >  \,
    \,    \frac{r_i^{n-1}}{m} .  
    \]
          Thus
      \begin{equation}
      \label{3.43}
     \sum_{i} \, r_i^{ n - 1}  <  m \sum_{i} \hat \mu ( B ( x_i,  r_i ) )
             \leq  c \, m
       \, \hat \mu    (\ar O \cap B ( \hat z,  \rho ) )  < \infty.   
       \end{equation}
                   Letting $ \de  \rar 0 $ and using the definition of $
    H^{n - 1}  $ measure  we conclude from (\ref{3.43})  that $ H^{n - 1}  (  P_m )  < \infty. $
     Hence  $ P $ has $ \si $ finite  $ H^{n-1} $ measure.
\section{Closing Remarks}
 \label{section4}
 The existence of a measure,  say $  \mu, $   corresponding to a  positive weak
solution $ u $ in $ O \cap B ( \hat z, r ) $  with vanishing boundary
values, as in  (\ref{1.2}), can be  shown  for a large
class of  divergence form partial differential equations.
  What can be said about $ \hd  \mu? $  What can be said
  about  analogues of  Theorems  \ref{theorem1.1}, \ref{theorem1.3}?   Regarding these questions we note that
  Akman in \cite{A13} has considered  PDE's   whose  Euler equations  arise from   minimization problems  with integrands involving  $ f ( \nabla v ) $ and   $ v  \in  W^{1,p}.$
  More specifically for  fixed
  $ p, 1 < p < \infty, $  the function   $f:\rn{2} \sem \{0\} \to(0,\infty),$  is  homogeneous of degree $p$ on $\rn{2}$. That is,
\[  
f(\eta)=|\eta|^{p}f\left(\frac{\eta}{|\eta|}\right)>0\,\ \mbox{when}\,\ \eta
= (\eta_1, \eta_2)  \in\rn{2} \sem \{0\} .
\]  
Also  $ \nabla f = ( f_{\eta_1}, f_{\eta_2} ) $ is   $ \de $ monotone on   $\rn{2} $ for some $ \de > 0$
(see \cite{AIM09} for a definition of $ \de $ monotone).        
In \cite{A13}, Akman   considers  weak solutions to the Euler-Lagrange equation,
\begin{equation} 
\label{4.1} \sum\limits_{k=1}^{2}\frac{\ar}{\partial x_{k}}\left(\frac{\ar f}{\partial\eta_k}(\nabla u(x))\right) = 0 \, \mbox{ when } x = (x_1, x_2)  \in \Om \cap N, 
\end{equation}  
where $ \Om \subset  \rn{2} $ is a bounded simply connected domain and $ N $ is a neighborhood of  $ \ar \Om. $
   Assume also that $ u > 0 $ is continuous in $ N $ with $ u \equiv 0 $ in $ N  \sem \Om. $ Under these assumptions it follows that there exists
a unique finite  positive  Borel measure $ \mu $ with support in $ \ar \Om $ satisfying
\[
\int\limits_{\rn{2}}\lan\nabla f(\nabla u), \nabla\phi\ran dA=-\int\limits_{\partial\Omega}\phi d \mu
\] 
whenever $\phi\in C_{0}^{\infty}(N)$.
He proves

\begin{theorem} 
\label{theorem4.1}
Let  $p, f,  \Om, N,  u, \mu $  be as above   and put
\[
\la(r)=r\,\exp   \left[ A\sqrt{\log\frac{1}{r}\,\log\log\frac{1}{r}} \right]\,\ \mbox{for}\, \ 0<r<10^{-6}.
\]        
\[ 
\begin{array}{l}  
(a) \hs{.2in}  \mbox{ If $p\geq 2$, there exists $A=A(p)\leq -1$ such that $ \mu $ is }
 \\ \hs{.38in} \mbox{ concentrated on a set of $\si-$finite $H^{\la}$ Hausdorff measure.}\\
(b) \hs{.2in} \mbox{  If  $1<p\leq 2$, there exists $A=A(p)\geq 1$, such that $\mu$ is absolutely }  \\ \hs{.38in}  \mbox{continuous with respect to $H^{\la}$ Hausdorff measure. } \end{array} \]
\end{theorem}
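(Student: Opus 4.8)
The plan is to transcribe the proof of Theorem \ref{theorem1.3} from \cite{L12}, which is built on Makarov's techniques (\cite{M85,GM05,M90,P92}), with the $p$ Laplacian replaced by the Euler--Lagrange equation (\ref{4.1}) and the two structural hypotheses on $f$ used in an essential way. Homogeneity of degree $p$ supplies Euler's identities $\lan\nabla f(\eta),\eta\ran=p\,f(\eta)$ and $D^2f(\eta)\,\eta=(p-1)\nabla f(\eta)$, while $\de$ monotonicity of $\nabla f$ (see \cite{AIM09}) forces $\nabla f$ to be a quasiconformal homeomorphism of $\rn{2}$; together these pin down the ellipticity of (\ref{4.1}) in terms of $p$ and $\de$ alone. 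First I would record the $C^{1,\al}_{\mathrm{loc}}$ regularity theory for (\ref{4.1}) and note that, where $\nabla u\ne 0$, differentiating (\ref{4.1}) shows that $u$ and each $u_{x_k}$ solve the linear divergence form equation $L\ze=\sum_{i,k}\ar_{x_i}(a_{ik}\ze_{x_k})=0$ with $a_{ik}=f_{\eta_i\eta_k}(\nabla u)$, which by the remarks above is uniformly elliptic on compacta with constants comparable to $|\nabla u|^{p-2}$, exactly the analogue of (\ref{1.5}).

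Next I would establish the fundamental inequality (\ref{1.7}) here. Since $\Om$ is bounded and simply connected, $\ar\Om$ is connected, and a boundary Harnack / barrier argument as in \cite{L12} yields $u(x)\approx d(x,\ar\Om)\,|\nabla u(x)|$ near $\ar\Om$; hence $L$ is locally uniformly elliptic up to the boundary. Then, by the homogeneity computation analogous to the one following Lemma \ref{lemma2.6} --- now with the Hessian $D^2f(\nabla u)$ in place of $(b_{ik})$, and necessarily more delicate because $\nabla f$ need not be radial --- the function $v=\log|\nabla u|$ satisfies $Lv\ge 0$ when $p\ge 2$ and $Lv\le 0$ when $1<p\le 2$, the counterpart of (\ref{1.6}). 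This sign dichotomy is the origin of the two cases: a subsolution satisfies a mean value inequality of the type needed for the concentration statement (a), while a supersolution satisfies the reversed inequality used for (b); at $p=2$, where $Lv=0$, both hold.

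With these ingredients I would run the Makarov estimate. Using $\de$ monotonicity and homogeneity, the complex gradient $w=u_{x_1}-iu_{x_2}$ satisfies a nonlinear Beltrami equation whose coefficient has modulus bounded by some $k=k(p,\de)<1$, so $w$ is $K=K(p,\de)$ quasiregular; by Stoilow factorization $\log|w|$ is locally a harmonic function precomposed with a $K$ quasiconformal homeomorphism. On $\ar\Om$ one has $\nabla u=|\nabla u|\,\nu$ ($\nu$ the inner normal), so by homogeneity $d\mu\approx|\nabla u|^{p-1}\,dH^1=|w|^{p-1}\,dH^1$ there. Makarov's law of the iterated logarithm, applied to such quasiconformally modified harmonic functions with constants depending only on $K$, controls the boundary modulus of continuity of $|w|$ by $\sqrt{\log(1/r)\log\log(1/r)}$; combining this with the one-sided sign of $v$ yields, for $p\ge 2$, a set $E$ with $H^\la(E)=0$, $A\le -1$, carrying full $\mu$ mass and $\si$ finite, which is (a), and for $1<p\le 2$ the implication $H^\la(E)=0\Rightarrow\mu(E)=0$ with $A\ge 1$, which is (b). The passage from this interior information on $v$ to the boundary estimate on $\mu$ I would organize as in the proof of Lemma \ref{lemma3.4}: integration by parts against $v$, controlled using (\ref{1.7}) and a Caccioppoli bound $\int|\nabla u|^{p-2}\sum_{i,j}u_{x_i x_j}^2\,dx<\infty$.

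I expect the main obstacle to be twofold. First, because $f$ need not be $C^2$ and $u$ need not be $C^2$ across the critical set $\{\nabla u=0\}$, every identity above --- the linearization, the sign of $Lv$, the Beltrami system for $w$, the integration by parts --- must be justified by a truncation scheme, replacing $v$ by $\max\{\log|\nabla u|,\eta\}$ and working on the open set $\{\nabla u\ne 0\}$ exactly as in Lemma \ref{lemma2.6}, while securing the Caccioppoli bound and the $W^{1,2}_{\mathrm{loc}}$ membership of $|\nabla u|^{p-2}u_{x_i}$ in this reduced regularity. Second, and more seriously, one must verify that the Beltrami dilatation $k$, and hence the admissible $A$, depend only on $p$ and $\de$, so that the law of the iterated logarithm applies with constants uniform over the class of admissible $f$; this is exactly where $\de$ monotonicity is indispensable --- it is the hypothesis that makes the Beltrami coefficient subunit --- and making the dependence explicit is the technical heart of the argument. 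Once those uniform bounds are in hand, the rest is a faithful transcription of the Makarov/Lewis scheme.
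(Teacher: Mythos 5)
The paper does not actually prove Theorem~\ref{theorem4.1}; it is a result of Akman, cited from~\cite{A13}, and the only guidance given here is the remark that ``a key argument in the proof of Theorem~\ref{theorem4.1} involves showing that $\zeta=\log f(\nabla u)$ is a weak subsolution, supersolution or solution to $L\zeta=\sum_{k,j}\ar_{x_k}\bigl(f_{\eta_k\eta_j}(\nabla u)\,\zeta_{x_j}\bigr)=0$'' according as $p>2$, $1<p<2$, $p=2$. Your overall architecture --- $C^{1,\al}$ regularity, linearization with coefficients $f_{\eta_i\eta_k}(\nabla u)$, the fundamental inequality from simple connectivity, quasiregularity of the complex gradient via $\de$ monotonicity, Stoilow factorization, Makarov's LIL, and integration by parts --- is a faithful transcription of the Lewis scheme and is plausibly the right skeleton.

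However, there is a concrete gap in the central step: you run the sign argument for $v=\log|\nabla u|$, whereas the relevant object is $\zeta=\log f(\nabla u)$. By the homogeneity of $f$, $\zeta=p\log|\nabla u|+\log f\bigl(\nabla u/|\nabla u|\bigr)$, and the second term is a genuinely nonconstant function of the direction of $\nabla u$ unless $f$ is radial. Adding such a directional term is \emph{not} harmless: $Lv$ and $L\zeta$ are not related by a fixed sign, so the dichotomy ``$Lv\ge 0$ for $p\ge 2$, $Lv\le 0$ for $p\le 2$'' that you assert does not follow from the corresponding statement for $\zeta$, and there is no reason to expect it to hold for a general $\de$-monotone, degree-$p$ homogeneous $f$. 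Only in the $p$-Laplacian case $f(\eta)=|\eta|^p$, where $\zeta=p v+\mathrm{const}$, do the two agree. The correct quantity $\zeta$ is also the natural one from the boundary side: writing $\nu$ for the outer normal, Euler's identity gives $d\mu=\lan\nabla f(\nabla u),-\nu\ran\,dH^1=p\,f(\nabla u)\,|\nabla u|^{-1}\,dH^1$ on the smooth part of $\ar\Om$, so the density of $\mu$ is governed by $f(\nabla u)$, not by $|\nabla u|^{p-1}$ alone (they are comparable but not equal, and the Makarov-type oscillation estimate must be proved for the quantity that actually carries the density). Replacing $v$ by $\zeta$ throughout --- in the pointwise sign computation of~\cite{ALV12}, in the truncation and Caccioppoli estimates, and in the boundary integration by parts --- is the repair; everything else in your outline then has a chance to go through as written, with the same caveat you already flag about tracking the $(p,\de)$-dependence of the dilatation.
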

  For  $ p  = 2 $  and  $ f ( \eta ) = | \eta |^p $ the above theorem is slightly weaker than
 Theorem \ref{theorem1.3}.    It is easily seen that  Theorem \ref{theorem4.1} implies
 \[  
 \hd \mu  \leq  1  \mbox{ for }  p  \geq 2  \mbox{ and }  \hd \mu \geq 1 \mbox{ for } 1 < p \leq  2. 
 \]
  A key argument in the proof of  Theorem \ref{theorem4.1}  involves
  showing that  $ \zeta =  \log f ( \nabla u ) $ is a weak subsolution, supersolution or solution to
  \[
  L \zeta (x)   =  \sum_{k,j=1}^2   \frac{\ar}{\ar x_k} \left( f_{\eta_k \eta_j}(\nabla u(z))  \frac{\ar \zeta (x)  }{ \ar x_j } \right)  \mbox{ when }   x \in  \Om \cap N 
  \]
  and  $ p > 2, 1 < p < 2, $
  $ p = 2, $ respectively. In  \cite{ALV12} this was shown pointwise at
  $ x \in \Om \cap N $ when $ \nabla u, f, $ are sufficiently smooth
   and $ \nabla u ( x ) \not = 0. $  We plan  to use this  fact   and  the technique in  Theorem \ref{theorem4.1}
    to   prove  analogues of  Theorem \ref{theorem4.1}  when  $ n = 2 $ and also  higher dimensional analogues.
  The case $ p = n $ in Theorem \ref{theorem1.1} and $ p = 2 $ in  the proposed generalization of  Theorem \ref{theorem4.1} are 
particularly interesting.  Can one for example do away with  the uniform fatness assumption in  
Theorem \ref{theorem1.1}  or  the proposed generalization of   Theorem \ref{theorem4.1}  when $p = 2, n = 2?$  
The argument in \cite{W93}   and  \cite{JW88} relies on  a  
certain integral inequality (see Lemma 3.1 in \cite{JW88}).
\bibliographystyle{spmpsci}
\bibliography{myref}   
\end{document}